\newcommand\codeg{\operatorname{codeg}}
\newtheorem{theorem}{Theorem}[section]
\newtheorem{lemma}[theorem]{Lemma}
\newtheorem{corollary}[theorem]{Corollary}
\newtheorem{remark}[theorem]{Remark}
\numberwithin{equation}{section}
\begin{document}%The hitting time of connectivity o
\title
{On partial Steiner $(n,r,\ell)$-system process\thanks{Section 4
has been submitted to Discrete Mathematics with title ``Connectivity of the linear uniform hypergraph process".}}

%Counting
%and the hitting time of connectivity in its  process
\author
{{Fang Tian}\\
{\small Department of Applied Mathematics}\\
{\small Shanghai University of Finance and Economics, Shanghai, 200433, China}\\
{\small\tt tianf@mail.shufe.edu.cn}
}

\date{}
 \maketitle
\begin{abstract}  For given integers $r$ and $\ell$ such that
  $2\leqslant\ell\leqslant r-1$, an $r$-uniform hypergraph $H$ is called
  a partial Steiner $(n,r,\ell)$-system,
if every subset of size $\ell$  lies in at most one
edge of $H$.  In particular, partial Steiner $(n,r,2)$-systems
 are also called linear hypergraphs.
 The partial Steiner $(n,r,\ell)$-system process starts with  an empty hypergraph on vertex set $[n]$
 at time $0$, the $ \binom{n}{r}$ edges arrive one by one according to a uniformly
 chosen permutation, and each edge is added if and only if it does not overlap any of the previously-added
 edges in $\ell$ or more vertices. In this paper, we show with high probability, independent of $\ell$,
 the sharp threshold of connectivity in the algorithm is $ \frac{n}{r}\log n$ and the very edge which links the last isolated
  vertex with another vertex makes the partial Steiner $(n,r,\ell)$-system connected.
\end{abstract}

\vskip0.4cm \hskip 10pt {\bf Keywords:}\quad asymptotic enumeration, graph process,
 linear hypergraph, connectivity.

\hskip 10pt  hitting time, connectivity.

\hskip 10pt {\bf Mathematics Subject Classifications:}\ 05A16, 05D40

\date{}
 \maketitle

\section{Introduction}\label{s:1}

Hypergraphs, which are also known as set systems and block designs,
are fundamental to the study of complex discrete systems.
Let $r$ and $\ell$ be given integers such that
 $2\leqslant\ell\leqslant r-1$. A hypergraph ${H}$ on vertex set $[n]$ is an \textit{$r$-uniform hypergraph}
(\textit{$r$-graph} for short) if each edge is a set of  $r$ vertices.
An $r$-graph is called a Steiner $(n,r,\ell)$-system,
if every subset of size $\ell$ (\textit{$\ell$-set} for short) lies in  exactly one
edge of $H$. Replacing ``exactly one edge" by ``at most one edge", we have
 a  partial Steiner $(n,r,\ell)$-system.
In particular,  partial Steiner $(n,r,2)$-systems are also called
\textit{linear hypergraphs}, and Steiner $(n,3,2)$-systems are called
\textit{Steiner triple systems}. Let $\mathcal{H}_r(n, m)$ denote the set of $r$-graphs with $m$ edges,
  $\mathcal{L}_r^\ell(n, m)$ denote the set of partial Steiner $(n,r,\ell)$-systems
in $\mathcal{H}_r(n,m)$,  and  $\mathcal{L}_r^2(n,m)$
is specially denoted as $\mathcal{L}_r(n,m)$.

The \textit{uniform hypergraph process} $\mathbb{H}_r(n,m)$
is a Markov process with time running through the set $\{0,1,\cdots, \binom{n}{r}\}$.
It is the typical  random graph process $\mathbb{G}(n,m)$
 introduced by Erd\H{o}s and R\'{e}nyi when $r=2$~\cite{alan15}.
Similarly, the \textit {partial Steiner $(n,r,\ell)$-system process}
 begins with no edges on  vertex set $[n]$ at time $0$,
 all $r$-sets arrive one by one according to a uniformly
 chosen permutation, and each one is added if and only if it does not overlap any of the
 previously-added edges in $\ell$ or more vertices. In particular, it is the
  \textit {linear hypergraph process} when $\ell=2$. Let
  $\mathbb{L}_r^\ell(n, m)$ with $2\leqslant \ell\leqslant r-1$
  denote the $m$-th stage of the uniform partial Steiner
  $(n,r,\ell)$-system process,  and  $\mathbb{L}_r^2(n,m)$ is
   also denoted as $\mathbb{L}_r(n,m)$.

The hitting time of connectivity is a classic problem which has been extensively
studied in the theory of random graph processes.  Bollob\'{a}s and Thomason~\cite{bollo85} proved that,
with probability approaching to $1$ when $n\rightarrow\infty$ (\textit{w.h.p.} for short),
$m= \frac{n}{2}\log n$ is a sharp threshold of connectivity for $\mathbb{G}(n,m)$
and the very edge which links the last isolated vertex with another vertex makes the graph connected.
 Poole~\cite{poole15} proved the analogous result for $\mathbb{H}_r(n,m)$
 when $r\geqslant 3$ is a fixed integer, which means that $m= \frac{n}{r}\log n$
 is the hitting time  of connectivity for $\mathbb{H}_r(n,m)$.
The proofs in~\cite{bollo85,poole15} are due to the fact
that the $m$-th stage $\mathbb{H}_r(n,m)$  can be identified with the uniform random hypergraph
from $\mathcal{H}_r(n,m)$, and  behaves in
a similar fashion when $m$ equals or is close to
the expected number of edges of $\mathbb{H}_r(n, p)$, where
 a random $r$-graph $\mathbb{H}_r(n, p)$ is an $r$-graph
 on the vertex set $[n]$ and
 each $r$-set is an edge independently with probability $p$.

It might be surmised that the threshold  of connectivity for
$\mathbb{L}_r^\ell(n, m)$ is smaller than the one for $\mathbb{H}_r(n,m)$
because of its constraint on $r$-graphs. %, while it has no results. %corresponding
Let $\tau_c= \min\{m: \mathbb{L}_r^\ell(n, m)\ \text{is connected}\}$
 and $\tau_o= \min\{m: \mathbb{L}_r^\ell(n, m)\ \text{has no isolated vertices}\}$.
  These two properties are certainly monotone increasing properties,
then $\tau_c$ and $\tau_o$ are well-defined %, while they are unknown
in $\mathbb{L}_r^\ell(n, m)$.
In this paper, for any fixed integers $r$  and $\ell$ with $2\leqslant\ell\leqslant r-1$,
we show that $\mathbb{L}_r^\ell(n, m)$ has the same threshold function
of connectivity with $\mathbb{H}_r(n,m)$, and $\mathbb{L}_r^\ell(n, m)$ also becomes
connected exactly at the moment when the last isolated vertex disappears.
%independent of $\ell$.
\begin{theorem}\label{t1.6}
For any fixed integers $r$ and $\ell$ with $2\leqslant\ell\leqslant r-1$, w.h.p., $m= \frac{n}{r}\log n$
is a sharp threshold of connectivity for $\mathbb{L}_r^\ell(n, m)$ and
$\tau_{c}=\tau_{o}$ for $\mathbb{L}_r^\ell(n, m)$.
\end{theorem}

From  the proof of Theorem~\ref{t1.6}, we also have a corollary about
 the distribution of the number of isolated vertices in $\mathbb{L}_r^\ell(n, m)$ when
 $m= \frac{n}{r}\bigl(\log n+c_n)$ and $c_n\rightarrow c\in \mathbb{R}$.

\begin{corollary}\label{c1.8}
For any fixed integers $r$ and $\ell$ with $2\leqslant\ell\leqslant r-1$,
let $m= \frac{n}{r}\bigl(\log n+c_n)$ with $c_n\rightarrow c\in \mathbb{R}$. % when $n\rightarrow\infty$.
The number of isolated vertices in $\mathbb{L}_r^\ell(n, m)$ tends in distribution to
the Poisson distribution with mean $\exp[-c]$.
\end{corollary}

In order to prove Theorem~\ref{t1.6}, unlike the proofs in~\cite{bollo85,poole15}, we cannot work in
an analogue of the random hypergraph model $\mathbb{H}_r(n,p)$, since randomly-chosen
independent edges are very unlikely to generate a linear hypergraph. Instead, we will
rely on enumeration results in Theorem~\ref{t1.1} to Theorem~\ref{t1.4} below.

 Little is known about the enumeration of %
 distinct partial Steiner $(n,r,\ell)$-systems with a given number of edges.
 %Firstly, we consider the enumeration of $\mathcal{L}_r^\ell(n,m)$.
Hasheminezhad and McKay~\cite{hashe20} obtained the asymptotic number of linear hypergraphs
with a given number of edges of each size, assuming a constant bound on the edge
size and $o(n^{\frac{4}{3}})$ edges. McKay and Tian~\cite{mckay18}
obtained the asymptotic enumeration formula for the set of $\mathcal{L}_r^\ell(n, m)$
as far as $m=o(n^{ \frac{3}{2}})$. Tian~\cite{tian2021} asymptotically determined
the number of linear multipartite hypergraphs  when the number of edges is $m=o(n^{ \frac{4}{3}})$.
In fact,  we can apply exactly the same approach to obtain an asymptotic formula for  $|\mathcal{L}_r^\ell(n, m)|$
when $3\leqslant\ell\leqslant r-1$ and $m=o(n^{ \frac{\ell+1}{2}})$.
It turns out that the proof is a little easier when $\ell\geqslant 3$,
as only one type of clusters needs to be considered, compared with four clusters
in the case $\ell=2$, see~\cite{mckay18}. Hence, the asymptotic expression when $\ell\geqslant 3$
is simpler than the corresponding expression when $\ell=2$, so the
statements of Theorem~\ref{t1.1} and Theorem~\ref{t1.3} cannot be combined.

Let $N_i= \binom{n-i}{r}$ for $0\leqslant i\leqslant n$
and $[x]_t=x(x-1)\cdots(x-t+1)$ for some positive integer $t$ be the falling factorial.
 The standard asymptotic notations $o$ and $O$ refer to $n\rightarrow\infty$.
 The floor and ceiling signs are omitted whenever they are not crucial.
\begin{theorem}{\rm{(\cite{mckay18}, Theorem 1.1)}}\label{t1.1}
For a fixed integer $r\geqslant 3$, let $m=m(n)$ be an integer
with $m=o(n^{ \frac{3}{2}})$. Then, as $n\rightarrow \infty$,
\begin{align*}
|\mathcal{L}_r(n, m)|
&={ \frac{ N_0^m}{m!}}\exp\biggl[- \frac{[r]_2^2[m]_2}{4n^2}-
\frac{[r]_2^3(3r^2-15r+20)m^3}{24n^4}+O\Bigl( \frac{m^2}{n^3}\Bigr)\biggr].
\end{align*}
\end{theorem}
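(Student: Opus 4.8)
The plan is to turn the count into a probability estimate and then expand it. Every partial Steiner $(n,r,2)$-system with $m$ edges is just an $m$-element family of $r$-sets, no two of which meet in two or more vertices, so $|\mathcal{S}(n,r,2;m)|=\binom{N}{m}\,p_m$, where $p_m$ is the probability that a uniformly random $m$-subset $S\subseteq\binom{[n]}{r}$ is linear. Since $\binom{N}{m}=\frac{N^m}{m!}\exp\!\bigl(-\tbinom{m}{2}/N+O(m^3/N^2)\bigr)=\frac{N^m}{m!}\,e^{O(m^2/n^3)}$ for every fixed $r\ge3$, it suffices to prove that
\begin{equation*}
\log p_m=-\frac{[r]_2^2[m]_2}{4n^2}-\frac{[r]_2^3(3r^2-15r+20)m^3}{24n^4}+O\!\Bigl(\frac{m^2}{n^3}\Bigr).
\end{equation*}

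Call a pair $\{e,f\}$ of distinct $r$-sets a \emph{conflict} if $|e\cap f|\ge2$; let $\mathcal{C}$ be the family of all conflicts and note that the number of them with $|e\cap f|=j$ equals $\tfrac12\binom{n}{j}\binom{n-j}{r-j}\binom{n-r}{r-j}=\Theta(n^{2r-j})$ for $2\le j\le r$. As $S$ is linear exactly when it contains no conflict, inclusion–exclusion gives
\begin{equation*}
p_m=\sum_{\mathcal{J}\subseteq\mathcal{C}}(-1)^{|\mathcal{J}|}\,\frac{[m]_{v(\mathcal{J})}}{[N]_{v(\mathcal{J})}},\qquad v(\mathcal{J}):=\bigl|\textstyle\bigcup\mathcal{J}\bigr|,
\end{equation*}
where $\bigcup\mathcal{J}$ denotes the set of $r$-sets occurring in the conflicts of $\mathcal{J}$. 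Grouping the terms according to the family $V=\bigcup\mathcal{J}$, the coefficient of $[m]_{|V|}/[N]_{|V|}$ becomes $w(V)=\sum_{\mathcal{J}:\bigcup\mathcal{J}=V}(-1)^{|\mathcal{J}|}$; this vanishes unless every member of $V$ participates in a conflict inside $V$, is multiplicative over the connected components of the conflict graph on $V$, and equals $-1$ for a single conflict, $+1$ for a \emph{cherry} (three $r$-sets whose conflict graph is a path) and $+2$ for a \emph{tight triple} (three $r$-sets having one common $2$-set as each pairwise intersection). Because $[m]_v/[N]_v$ is only approximately multiplicative in $v$, and — crucially — the partial sums of the displayed inclusion–exclusion tend to $\pm\infty$ once $m\gg n$, one must pass to $\log p_m$ through a polymer/cluster expansion rather than by truncating: up to a controlled error, $\log p_m=\sum_{V}w(V)\,[m]_{|V|}/[N]_{|V|}$ summed over the families $V$ whose conflict graph is connected, plus the interaction corrections coming from the non-multiplicativity of $[m]_v/[N]_v$ (that is, from sampling without replacement) and from incompatible polymers.

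It then remains to extract the expansion. The single-conflict terms give $-|\mathcal{C}|\,[m]_2/[N]_2$; conflicts with $|e\cap f|\ge3$ contribute only $O(m^2/n^3)$ in total, and a short binomial computation yields $|\mathcal{C}|\,[m]_2/[N]_2=\frac{[r]_2^2[m]_2}{4n^2}+O(m^2/n^3)$, which is the $m^2$-term. The $m^3/n^4$-term is assembled from exactly the contributions of that order, namely the $\Theta(n^{3r-4})$ cherries $e_1\!-\!e_2\!-\!e_3$ with $|e_1\cap e_2|=|e_2\cap e_3|=2$ in generic position (here one must separate the cases in which the two $2$-subsets $e_1\cap e_2$ and $e_2\cap e_3$ of $e_2$ are disjoint or share a vertex), the $\Theta(n^{3r-4})$ tight triples, and the leading interaction correction $-\tfrac{2}{m}\bigl(\tfrac{[r]_2^2m^2}{4n^2}\bigr)^2=\Theta(m^3/n^4)$, which appears because the events $c\subseteq S$ for vertex-disjoint conflicts $c$ are negatively correlated under sampling without replacement; collecting all contributions of this order yields the coefficient $-\frac{[r]_2^3(3r^2-15r+20)}{24}$. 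Everything else — connected configurations on four or more $r$-sets, configurations built from a conflict of order $\ge3$, the other isomorphism types of three-$r$-set configurations, and all higher interaction and multiplicativity corrections — has to be shown to sum to $O(m^2/n^3)$.

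I expect this last point to be the principal difficulty. A connected configuration on $v$ $r$-sets has at most $r+(v-1)(r-2)$ vertices, hence at most about $n^{r+(v-1)(r-2)}$ of them, so it contributes at most of order $n^{r+(v-1)(r-2)}(m/N)^v$, which is $\Theta\bigl(m(m/n^2)^{v-1}\bigr)$; the hypothesis $m=o(n^{3/2})$ is precisely what forces every contribution not already extracted to be $O\!\bigl((m/n^{3/2})^{c}\,m^2/n^3\bigr)$ for some $c>0$ (already the $v=4$ configurations would spoil the error term if one only assumed $m=o(n^2)$), and a crude geometric tail estimate of the shape $\sum_v(\text{\#configs of size }v)(m/N)^v\le\sum_v(C s_1)^v$ with $s_1=\Theta(m^2/n^2)$ is what makes the cluster series converge at all. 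Granting convergence and the $O(m^2/n^3)$ error bound, combining the pieces and multiplying by $\binom{N}{m}=\frac{N^m}{m!}e^{O(m^2/n^3)}$ completes the proof; the same inclusion–exclusion, run after conditioning a fixed partial system into $S$, also produces the subhypergraph-containment probabilities referred to alongside the theorem.
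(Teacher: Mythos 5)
Your route is genuinely different from the one behind this statement: Theorem~\ref{t1.1} is imported from McKay--Tian, whose proof (refined in Section~\ref{s:3} of this paper for general $\ell$) never expands $\log p_m$ at all. Instead it restricts to hypergraphs whose clusters are simple, stratifies by the number $t$ of clusters, computes $|\mathcal{S}^{+}(t)|/|\mathcal{S}^{+}(t-1)|$ by a forward/reverse switching count, and sums the ratios with the Greenhill--McKay--Wang summation lemma (Lemma~\ref{l2.6}). That machinery is built precisely to avoid the two things your sketch must confront head-on: dependence between edge-containment events under sampling without replacement, and the divergence of the partial sums of the raw inclusion--exclusion. Your combinatorial skeleton is sound --- the weights $-1$, $+1$, $+2$ for conflicts, cherries and tight triples check out, the single-conflict term $\frac{[r]_2^2[m]_2}{4n^2}$ is right, and the interaction correction $-\frac{2}{m}\bigl(\frac{[r]_2^2m^2}{4n^2}\bigr)^2$ coming from $\frac{[m]_4}{[N]_4}=\bigl(\frac{[m]_2}{[N]_2}\bigr)^2\bigl(1-\frac{4}{m}+O(\frac{1}{m^2}+\frac{1}{N})\bigr)$ is indeed of order $m^3/n^4$ and must be included.

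There are, however, two genuine gaps. First, the passage from the exact identity $p_m=\sum_V w(V)[m]_{|V|}/[N]_{|V|}$ to a convergent expansion of $\log p_m$ with error $O(m^2/n^3)$ is the hard part, and you explicitly grant it. This is not an off-the-shelf polymer-model application: the measure is not a product measure, so Mayer/Kotecký--Preiss does not apply directly, and the required precision is severe --- for $m$ near $n^{3/2}$ the terms being organized are of size up to $n^{1-o(1)}$ and $n^{1/2-o(1)}$ while the permitted error is $o(1)$; you would need to exhibit and bound every higher-order non-multiplicativity and incompatibility correction. (Your own parameter $s_1=\Theta(m^2/n^2)$ tends to infinity when $m\gg n$, so the stated geometric bound $\sum_v(Cs_1)^v$ does not converge; the correct decaying ratio is $m/n^2$ per additional $r$-set in a connected configuration, as your bound $\Theta(m(m/n^2)^{v-1})$ in fact shows.) Second, the coefficient $-\frac{[r]_2^3(3r^2-15r+20)}{24}$ is asserted, never derived. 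Since $m^3/n^4\gg m^2/n^3$ whenever $m\gg n$, that cubic term is part of the content of the theorem rather than bookkeeping, and obtaining it requires separating the two cherry geometries, counting the tight triples, and verifying that these weighted counts together with the $-\frac{2}{m}\mu^2$ correction assemble to that polynomial in $r$. As written, the proposal is a plausible programme rather than a proof.
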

\begin{theorem}\label{t1.3}
For  fixed integers $r$ and $\ell$ such that $3\leqslant \ell\leqslant r-1$,
let  $m=m(n)$ be an integer with $m=o(n^{ \frac{\ell+1}{2}})$.
Then, as $n\rightarrow \infty$,
\begin{align}
|\mathcal{L}_r^\ell(n, m)|
&={ \frac{N_0^m}{m!}}\exp\biggl[- \frac{[r]_{\ell}^2[m]_2}{2\ell!n^{\ell}}
+O\Bigl( \frac{m^2}{n^{\ell+1}}\Bigr)\biggr].\notag
\end{align}
\end{theorem}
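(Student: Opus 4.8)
The plan is to reduce the count to a Poisson-type probability, in the spirit of McKay and Tian's treatment of the case $\ell=2$. Since two distinct $r$-sets $e,f$ share a common $\ell$-set exactly when $|e\cap f|\ge\ell$, and any $r$-set trivially shares an $\ell$-set with itself, ordering the edges gives the exact identity
\[
|\mathcal S(n,r,\ell;m)|=\frac1{m!}\sum_{(e_1,\dots,e_m)\in\binom{[n]}{r}^m}\ \prod_{1\le i<j\le m}\Bigl(1-\mathbf 1\bigl[\,|e_i\cap e_j|\ge\ell\,\bigr]\Bigr)=\frac{N^m}{m!}\,\mathbb P[Z=0],
\]
where $E_1,\dots,E_m$ are i.i.d.\ uniform $r$-subsets of $[n]$ and $Z$ is the number of pairs $\{i,j\}$ with $|E_i\cap E_j|\ge\ell$ (tuples with a repeated edge are automatically excluded). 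Put $\rho=\mathbb P[|E_1\cap E_2|\ge\ell]=\sum_{t=\ell}^{r}\binom rt\binom{n-r}{r-t}/N$; then $\mathbb E Z=\binom m2\rho$ exactly. Isolating the $t=\ell$ term, which equals $\tfrac{[r]_\ell^2}{\ell!\,n^\ell}\bigl(1+O(1/n)\bigr)$, from the terms $t\ge\ell+1$, whose sum is $O(n^{-\ell-1})$, gives $\rho=\tfrac{[r]_\ell^2}{\ell!\,n^\ell}+O(n^{-\ell-1})$, hence
\[
\mathbb E Z=\frac{[r]_\ell^2[m]_2}{2\,\ell!\,n^\ell}+O\!\Bigl(\frac{m^2}{n^{\ell+1}}\Bigr).
\]
So it suffices to prove $\mathbb P[Z=0]=\exp\bigl(-\mathbb E Z+O(m^2/n^{\ell+1})\bigr)$.

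For the upper bound I would invoke Janson's inequality in the product space $\binom{[n]}{r}^m$, noting that $\{Z=0\}=\bigcap_{i<j}\overline{A_{ij}}$ with $A_{ij}=\{|E_i\cap E_j|\ge\ell\}$ depending only on coordinates $i,j$. This gives $\mathbb P[Z=0]\le\exp(-\mathbb E Z+\tfrac12\Delta)$, where $\Delta$ is the sum of $\mathbb P[A_{ij}\cap A_{kl}]$ over ordered pairs of distinct index-pairs sharing an index. Such a pair shares exactly one index, and $\mathbb P[A_{ij}\cap A_{jk}]=\rho^2$ because, by symmetry, $\mathbb P[|E_i\cap E_j|\ge\ell\mid E_j]=\rho$ for every value of $E_j$, so the two conflicts are conditionally independent given $E_j$. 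Since there are $O(m^3)$ such pairs, $\Delta=O(m^3\rho^2)=O(m^3/n^{2\ell})$. Here the hypothesis enters: $m=o(n^{(\ell+1)/2})$ forces $m=O(n^{\ell-1})$ precisely because $\ell\ge3$ (this is the inequality $(\ell+1)/2\le\ell-1$), and then $m^3/n^{2\ell}=O(m^2/n^{\ell+1})$.

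For the matching lower bound, and to pin the error at the stated order, I would use the exact cluster/inclusion--exclusion expansion. Expanding the product gives $\mathbb P[Z=0]=\sum_{G\subseteq K_m}(-1)^{e(G)}q(G)$ with $q(G)=\mathbb P[\bigcap_{ij\in E(G)}A_{ij}]$, and the exponential formula turns this into $m!\,[x^m]\exp\bigl(\sum_{j\ge1}\tilde a_j x^j/j!\bigr)$ with cluster coefficients $\tilde a_j=\sum_{C}(-1)^{e(C)}q(C)$ over connected graphs $C$ on $[j]$; here $\tilde a_1=1$ and $\tilde a_2=-\rho$. Two inputs drive the estimate: (i) conditioning along a spanning tree shows $q(C)=\rho^{e(C)}$ when $C$ is a forest and $q(C)\le\rho^{v(C)-1}$ in general; (ii) the Penrose tree--graph bound gives $|\tilde a_j|\le j^{j-2}\rho^{j-1}$. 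Since $\rho m=o(1)$, the contribution of clusters of size $\ge3$ is bounded by $\rho^{-1}\sum_{j\ge3}j^{j-2}(\rho m)^j/j!=O(\rho^2 m^3)=O(m^2/n^{\ell+1})$, while extracting $[x^m]$ from the size-$\le2$ part $\sum_{l\ge0}[m]_{2l}(-\rho/2)^l/l!$ by a Laplace-type evaluation --- one that keeps track of $[m]_{2l}$ versus $[m]_2^l$ --- gives $\exp\bigl(-\rho[m]_2/2+O((\mathbb E Z)^2/m)\bigr)=\exp\bigl(-\rho[m]_2/2+O(m^2/n^{\ell+1})\bigr)$. Combined with $\rho=\tfrac{[r]_\ell^2}{\ell!\,n^\ell}+O(n^{-\ell-1})$, this yields the claimed formula.

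\textbf{The main obstacle} is this last step: controlling the error down to the additive order $O(m^2/n^{\ell+1})$ in the exponent, not merely to $(1+o(1))\mathbb E Z$, over the full range $m=o(n^{(\ell+1)/2})$, where $\mathbb E Z$ may grow like a positive power of $n$. It has two faces --- the tail bound on the cluster coefficients, where the sheer number of connected graphs makes any crude count useless and one genuinely needs the cancellation captured by the tree--graph inequality; and the resummation of the size-$\le2$ part, where an absolute-value bound on the $[m]_{2l}$-versus-$[m]_2^l$ discrepancy is too lossy and one must estimate the correction at the level of exponents. It is exactly $\ell\ge3$ that makes all of these corrections $O(m^2/n^{\ell+1})$, which is why --- in contrast with Theorem~\ref{t1.1} --- no additional explicit term survives here.
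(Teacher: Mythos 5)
Your proposal is correct in outline but follows a genuinely different route from the paper. The paper works in the uniform model $\mathcal{H}_r(n,m)$ and uses the switching method: it first shows (via Lemma~\ref{l2.2}) that almost all $m$-edge $r$-graphs have only simple, vertex-disjoint, and few ``clusters'' of linked edges, then computes the ratio $|\mathcal{S}^{+}(t)|/|\mathcal{S}^{+}(t-1)|$ by forward/reverse switchings and sums over $t$ with the summation lemma (Lemma~\ref{l2.6}). You instead reduce exactly to $\mathbb{P}[Z=0]$ for a sum of pairwise-conflict indicators over i.i.d.\ uniform $r$-sets and evaluate it by a correlation inequality plus the cluster expansion with the Penrose tree--graph bound. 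The error budget is the same in both arguments --- the term $O(m^3/n^{2\ell})=O(m^2/n^{\ell+1})$ coming from triples of interacting pairs is exactly where $\ell\geq 3$ is used in the paper as well --- and your identification of $\rho$, the conditional-independence computation $\mathbb{P}[A_{ij}\cap A_{jk}]=\rho^2$, and the bounds $q(C)\leq\rho^{v(C)-1}$ are all sound. What the paper's approach buys is that the same switching machinery directly yields Theorem~\ref{t1.4} (subgraph containment probabilities), which is needed later; what yours buys is a conceptually cleaner one-shot treatment of all higher-order clusters via the tree--graph inequality, with no need to classify cluster shapes.

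Two technical points need repair before your sketch is a proof. First, the classical Janson inequality applies to increasing events of the form $\{B_\alpha\subseteq R\}$ in an independent-coordinates model; your events $A_{ij}=\{|E_i\cap E_j|\geq\ell\}$ are not of this form, so you should invoke Suen's inequality (in Janson's refined version), which does apply to arbitrary events with a dependency graph built from independent underlying variables and gives $\mathbb{P}[Z=0]\leq\exp[-\mu+\Delta e^{2\delta}]$ with $\delta=O(m\rho)=o(1)$ --- the same conclusion. Second, the ``Laplace-type evaluation'' of $\sum_{l\geq 0}\frac{[m]_{2l}}{2^{l}l!}(-\rho)^{l}$ is an alternating sum, so Laplace's method does not apply naively; it can be handled, e.g., by recognizing it as $\rho^{m/2}He_m(\rho^{-1/2})$ and using Hermite-polynomial asymptotics for arguments far outside the zero region (valid since $\rho m=o(1)$), which indeed yields $\exp[-\rho[m]_2/2+O(\rho^2m^3)]$. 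With these two substitutions your argument goes through and recovers the theorem.
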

%\noindent Note that we cannot combine Theorem~\ref{t1.1} with Theorem~\ref{t1.3},
%due to the fact that there are four ways that large clusters can appear
%in $\mathcal{L}_r(n, m)$ when $\ell=2$ and  $m=o(n^{ \frac{3}{2}})$~\cite{mckay18} compared with
%one cluster in $\mathcal{L}_r^\ell(n, m)$ when $3\leqslant \ell\leqslant r-1$ and $m=o(n^{ \frac{\ell+1}{2}})$.

As one application of Theorem~\ref{t1.1} and Theorem~\ref{t1.3},
using the same switching method as Theorem 1.4 in~\cite{mckay18} when $\ell=2$, we generalize
the probability that $H$ contains a given hypergraph as a subhypergraph
when $H\in \mathcal{L}_r^\ell(n, m)$ for $3\leqslant \ell\leqslant r-1$
chosen uniformly at random. At last, we have
 %The proof is also shown in the appendix for reference.
\begin{theorem}\label{t1.4}
For fixed integers $r$ and $\ell$ such that $2\leqslant \ell\leqslant r-1$,
let $m=m(n)$ and $k=k(n)$ be integers with
 $m=o(n^{ \frac{\ell+1}{2}})$ and $k=o\bigl(
 \frac{n^{\ell+1}}{m^2}\bigr)$.
  Let $K=K(n)$ be a given $r$-graph in $\mathcal{L}_r^\ell(n, k)$ and
 $H\in \mathcal{L}_r^\ell(n, m)$ be chosen uniformly at random. Then, as
$n\rightarrow \infty$,
\begin{align*}
\mathbb{P}[K\subseteq H]= \frac{[m]_k}{ N_0^k}\exp\biggl[ \frac{[r]_{\ell}^2k^2}{2\ell!n^{\ell}}
+O\Bigl( \frac{k}{n^{\ell}}+ \frac{m^2k}{n^{\ell+1}}\Bigr)\biggr].
\end{align*}
\end{theorem}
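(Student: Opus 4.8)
The plan is to compute $\mathbb{P}[K\subseteq H]$ as the ratio $|\mathcal{S}_K(n,r,\ell;m)|\big/|\mathcal{S}(n,r,\ell;m)|$, where $\mathcal{S}_K(n,r,\ell;m)$ is the set of members of $\mathcal{S}(n,r,\ell;m)$ that contain $K$ as a subhypergraph. The denominator is supplied by Theorem~\ref{t1.3}, so the content is in the numerator. An $m$-edge hypergraph lies in $\mathcal{S}_K(n,r,\ell;m)$ exactly when it is obtained from $K$ by adjoining $m-k$ further $r$-sets, none an edge of $K$, in such a way that no two of the resulting $m$ edges share an $\ell$-set. Since $K$ is itself a partial Steiner $(n,r,\ell)$-system, no two of its own edges conflict, so the only pairs that must be controlled are the $\binom{m-k}{2}$ ``new--new'' pairs and the $k(m-k)$ ``new--$K$'' pairs. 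Thus estimating the numerator is an inclusion--exclusion (equivalently, a cluster expansion) over conflicting pairs of edges --- the natural route to an estimate of the shape of Theorem~\ref{t1.3} --- but carried out on the ground set $\binom{[n]}{r}\setminus E(K)$ of size $N-k$, with $m-k$ chosen edges in place of $m$, and with the family of $k(m-k)$ potentially forbidden new--$K$ pairs adjoined to the conflict structure.

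The first moment of this expansion rests on the elementary fact that two random $r$-sets meet in at least $\ell$ vertices with probability $\frac{[r]_{\ell}^2}{\ell!\,n^{\ell}}\bigl(1+O(1/n)\bigr)$, the dominant contribution coming from meeting in exactly $\ell$ vertices; summing the conflict--indicator expectations over all relevant pairs yields $\bigl(\binom{m-k}{2}+k(m-k)\bigr)\frac{[r]_{\ell}^2}{\ell!\,n^{\ell}}+O(m^2/n^{\ell+1})$, and the identity $\binom{m-k}{2}+k(m-k)=\frac{[m]_2-[k]_2}{2}$ is what produces the stated exponent. The expected outcome is
\[
|\mathcal{S}_K(n,r,\ell;m)|=\binom{N-k}{m-k}\exp\!\left[-\frac{[r]_{\ell}^2\,([m]_2-[k]_2)}{2\ell!\,n^{\ell}}+O\!\Bigl(\frac{k}{n^{\ell}}+\frac{m^2k}{n^{\ell+1}}\Bigr)\right].
\]
Dividing by $|\mathcal{S}(n,r,\ell;m)|=\frac{N^m}{m!}\exp\bigl[-\frac{[r]_{\ell}^2[m]_2}{2\ell!\,n^{\ell}}+O(m^2/n^{\ell+1})\bigr]$ and using $\binom{N-k}{m-k}\frac{m!}{N^m}=\frac{[m]_k}{N^k}\exp[O(m^2/n^{r})]$, the two large exponents cancel up to $\exp\bigl[\frac{[r]_{\ell}^2[k]_2}{2\ell!\,n^{\ell}}\bigr]$; replacing $[k]_2$ by $k^2$ costs an extra $\exp[O(k/n^{\ell})]$, and since $r\ge\ell+1$ and $k\le m$ (the latter because $K\subseteq H$) one has $m^2/n^{r}\le m^2k/n^{\ell+1}$, so every correction is swallowed by $O(k/n^{\ell}+m^2k/n^{\ell+1})$ and the claimed formula follows. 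The case $k=0$ reduces to Theorem~\ref{t1.3}, which is a convenient consistency check, and the hypothesis $k=o(n^{\ell+1}/m^2)$ is exactly what makes the error $o(1)$, so that the $\exp[O(\cdot)]$ factor is $1+o(1)$.

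The main obstacle is the uniform control of the cluster--expansion error in the presence of the fixed hypergraph $K$: one has to classify the connected configurations of a few mutually conflicting $r$-sets according to how many of them are edges of $K$, and show that in every case the sum of joint probabilities is $O(k/n^{\ell}+m^2k/n^{\ell+1})$. For configurations built entirely from ``new'' edges the relevant bound is $O(m^3/n^{2\ell})$, which equals $O(m^2/n^{\ell+1})$ precisely because $\ell\ge3$ forces $o(n^{(\ell+1)/2})\subseteq O(n^{\ell-1})$ --- this is the step that fails at $\ell=2$, where a genuine cubic--in--$m$ term survives in the enumeration. For the mixed configurations involving at least one edge of $K$ the binding cases are those on three edges (for instance two edges of $K$ both meeting one new edge, or one edge of $K$ meeting two mutually conflicting new edges), each of which is $O(m^2k/n^{2\ell})+O(k^2m/n^{\ell+1})=O(m^2k/n^{\ell+1})$ once one uses $k\le m$; larger configurations are of smaller order. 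With this error bound in hand, the remaining steps are the routine manipulations indicated above.
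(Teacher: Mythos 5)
Your route is sound and genuinely different from the paper's. You compute $\mathbb{P}[K\subseteq H]$ as a single ratio, enumerating the numerator $|\mathcal{S}_K(n,r,\ell;m)|$ by rerunning the Theorem~\ref{t1.3} machinery on the ground set $\binom{[n]}{r}\setminus E(K)$ with two species of conflicting pairs (new--new and new--$K$); the identity $\binom{m-k}{2}+k(m-k)=\tfrac{1}{2}([m]_2-[k]_2)$ then makes the $k^2$ term in the exponent transparent, and your error bookkeeping ($m^3/n^{2\ell}=O(m^2/n^{\ell+1})$ for $\ell\ge3$, $k^2m/n^{\ell+1}\le m^2k/n^{\ell+1}$ via $k\le m$, etc.) checks out. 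The paper instead telescopes over the edges of $K$, writing $\mathbb{P}[K\subseteq H]=\prod_{i=1}^k\bigl(1+|\mathcal{S}(m,\overline{e}_i)|/|\mathcal{S}(m,e_i)|\bigr)^{-1}$ and estimating each factor by an $e_i$-displacement/replacement switching (Lemmas A.1--A.2, Corollary A.3); there the $k^2/2$ term emerges from summing the second-order correction $\frac{(m-i+1)\binom{r}{\ell}\binom{n-r}{r-\ell}}{N}$ over $i$, i.e.\ from the probability that a legal replacement is blocked by an already-present edge conflicting with $e_i$. The trade-off: the paper's decomposition keeps each switching one-dimensional (only a single distinguished edge at a time) at the cost of needing that second-order term in every factor, while your decomposition gets the exponent in one shot but requires a two-parameter stratification of the ambient space by the numbers of new--new and new--$K$ clusters, since the forward switching must act differently on the two types (a $K$-edge cannot be removed). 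That is where the remaining labor sits in your plan: you would need either a two-dimensional analogue of Lemma~\ref{l2.6} or an iterated application of it (first summing out the new--$K$ clusters, then the new--new ones), and the cluster-structure properties $\bf(a)$--$\bf(c)$ have to be re-verified for configurations containing edges of $K$, exactly the mixed three-edge cases you list. None of this looks like an obstruction, but it is a genuine extension of the Section~\ref{s:3} argument rather than a citation of it.
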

\noindent The proof of Theorem~\ref{t1.4} when $3\leqslant \ell\leqslant r-1$
can be found   in the
appendix.

%The remainder of the paper is structured as follows. Lemmas
%are presented in Section~\ref{s:2}. In Section~\ref{s:3},
%we prove the asymptotic enumeration
%formula for the set of $\mathcal{L}_r^\ell(n,m)$ with $3\leqslant \ell\leqslant r-1$
%and $m=o(n^{ \frac{\ell+1}{2}})$.
%We prove our main results in Section~\ref{s:6} and
%the last section concludes the work.

The remainder of the paper is structured as follows. Notation and  auxiliary results
are presented in Section~\ref{s:2}. In Section~\ref{s:3}, %and Section~\ref{s:4}
we consider Theorem~\ref{t1.3}, where the way to
prove them is a refinement of Theorem~1.1 in~\cite{mckay18}.
In Section~\ref{s:6},  we prove  Theorem~1.1. The last section concludes the work.
The proof of  Theorem~\ref{t1.4} is in the appendix.

\section{Notation and  auxiliary results}\label{s:2}

To state our results precisely, we need some definitions.
Let~$H$ be an~$r$-graph in $\mathcal{H}_r(n,m)$.
For~$U\subseteq [n]$, the \textit{codegree} of~$U$ in~$H$, denoted by~$\codeg({U})$,
is the number of edges of~$H$ containing~$U$.
In particular,~$\codeg({U})$ is the degree of~$v$ in~$H$
if~$U=\{v\}$ for~$v\in [n]$, denoted by~$\deg (v)$.
Given an integer $\ell$ with $2\leqslant\ell\leqslant r-1$, any~$\ell$-set
$\{x_1,\cdots,x_{\ell}\}\subseteq [n]$ in an edge~$e$ of~$H$
is called a \textit{link} of $e$ if $\codeg({x_1,\cdots,x_{\ell}})\geqslant 2$.
Two edges $e_i$ and $e_j$ in $H$ are called \textit{linked edges} if $|e_i\cap e_j|\geqslant\ell$.
As defined in~\cite{mckay18},
let $G_H$ be a simple graph whose vertices are the edges of~$H$,
with two vertices of $G$ adjacent iff the corresponding edges of~$H$ are linked.
An edge induced subgraph of $H$ corresponding to a non-trivial
component of $G_H$ is called a \textit{cluster} of $H$.

 Furthermore, for two positive-valued functions $f$, $g$
 on the variable $n$, we write $f\ll g$ to denote $\lim_{n\rightarrow\infty} f (n)/g(n) =0$,
 $f \sim g$ to denote $\lim_{n\rightarrow\infty} f (n)/g(n) =1$ and
 $f\lesssim g$ if and only if $\lim_{n\rightarrow\infty} \sup f (n)/g(n) \leqslant 1$ .
 For an event $A$ and a random variable $Z$ in an arbitrary probability space $(\Omega,\mathcal{F},\mathbb{P})$,
$\mathbb{P}[A]$ and $\mathbb{E}[Z]$  denote the probability of $A$ and the expectation
of $Z$.  An event is said to occur  with high probability (\textit {w.h.p.}  for short), if
the probability that it holds tends to 1 as $n\rightarrow\infty$.

In order to identify several events which have low probabilities
in the uniform probability space $\mathcal{H}_r(n,m)$ as
$m=o(n^{ \frac{\ell+1}{2}})$,
 the following two lemmas  will be useful.

\begin{lemma}[\cite{mckay18}, Lemma~2.1]\label{l2.1}
Let $t=t(n)\geqslant 1$ be an integer and
 $e_1,\ldots,e_{t}$ be distinct $r$-sets of $[n]$. For any given integer $r\geqslant 3$,
 let $H$ be an $r$-graph that is chosen uniformly at random from $\mathcal{H}_r(n,m)$.
Then the probability that $e_1,\ldots,e_t$ are  edges of $H$ is at most
$\bigl( \frac{m}{N}\bigr)^t$.
\end{lemma}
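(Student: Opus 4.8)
The plan is to evaluate the probability in question exactly and then bound the resulting expression factor by factor. A uniformly random $H\in\mathcal{H}_r(n,m)$ is nothing but a uniformly chosen $m$-element subset of the collection of all $N=\binom{n}{r}$ possible $r$-sets of $[n]$, so the event that the $t$ prescribed distinct $r$-sets $e_1,\ldots,e_t$ all belong to $H$ has probability
\[
\mathbb{P}[e_1,\ldots,e_t\in H]=\frac{\binom{N-t}{m-t}}{\binom{N}{m}}=\frac{[m]_t}{[N]_t}=\prod_{i=0}^{t-1}\frac{m-i}{N-i},
\]
where $[x]_t=x(x-1)\cdots(x-t+1)$ as above. (If $t>m$ both sides are $0$ and there is nothing to prove, so we may assume $t\le m$; note also that $\mathcal{H}_r(n,m)$ is nonempty only when $m\le N$.)

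The remaining step is to observe that for each $i$ with $0\le i\le t-1$ one has $\frac{m-i}{N-i}\le\frac{m}{N}$: after cross-multiplying, this is equivalent to $(m-i)N\le m(N-i)$, i.e. to $iN\ge im$, which holds because $m\le N$. Multiplying these $t$ inequalities together yields $\mathbb{P}[e_1,\ldots,e_t\in H]\le\bigl(\tfrac{m}{N}\bigr)^t$, as claimed.

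There is no real obstacle here: the statement is an elementary consequence of the hypergeometric distribution of the edges, and the bound $(m/N)^t$ is precisely the quantity one would guess by pretending that edges appear independently with probability $p=m/N$, as in the model $H_r(n,p)$. The only points deserving a word of care are the degenerate regime $t>m$ (trivial) and the implicit inequality $m\le N$; once these are noted, the proof reduces to the two displayed lines above. In the sequel this lemma is applied together with a union bound over the relevant families of $r$-sets to show that various small ``bad'' configurations occur with probability $o(1)$ in $\mathcal{H}_r(n,m)$ when $m=o(n^{(\ell+1)/2})$.
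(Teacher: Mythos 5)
Your proof is correct and is the standard argument (the paper itself cites this lemma from \cite{mckay18} without reproducing a proof): the probability is exactly the hypergeometric ratio $[m]_t/[N]_t$, and each factor $(m-i)/(N-i)\le m/N$ since $m\le N$. Nothing further is needed.
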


\begin{lemma}[\cite{mckay18}, Lemma~2.2]\label{l2.2}
Let $r$, $t$ and $\alpha$ be integers such that $r,t,\alpha=O(1)$ and $0\leqslant\alpha\leqslant rt$.
If a hypergraph $H$ is chosen uniformly at random from $\mathcal{H}_r(n,m)$,
then the expected number of sets of $t$ edges of $H$ whose union has
$rt-\alpha$ or fewer vertices is $O\bigl(m^t n^{-\alpha})$.
\end{lemma}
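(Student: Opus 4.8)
\emph{Proof plan.} The plan is to bound the expectation directly by a first-moment argument: apply linearity of expectation, use Lemma~\ref{l2.1} to control the probability that a prescribed family of $r$-sets all occur as edges, and use a crude enumeration to count the admissible families. Call an unordered family $\{e_1,\ldots,e_t\}$ of $t$ distinct $r$-subsets of $[n]$ \emph{admissible} if $|e_1\cup\cdots\cup e_t|\le rt-\alpha$, and let $X$ be the number of admissible families all of whose members are edges of $H$. This is exactly the quantity in the statement, and by linearity of expectation
\[
\mathbb{E}[X]=\sum_{\{e_1,\ldots,e_t\}\ \mathrm{admissible}}\mathbb{P}\bigl[e_1,\ldots,e_t\in H\bigr].
\]
For each fixed admissible family, Lemma~\ref{l2.1} bounds the summand by $\bigl(m/N\bigr)^t$, so $\mathbb{E}[X]$ is at most $\bigl(m/N\bigr)^t$ times the number of admissible families.

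Next I would count admissible families, grouping them by the exact size $s=|e_1\cup\cdots\cup e_t|$ of their union, where $s\le rt-\alpha$. One first chooses the $s$ vertices of the union in at most $\binom{n}{s}\le n^{s}$ ways, and then chooses the $t$ distinct $r$-subsets covering these $s$ vertices. Since $s\le rt=O(1)$, the latter count is bounded by a constant $C=C(r,t)$ (for instance $C\le\binom{\binom{rt}{r}}{t}$), independent of $n$. Hence the number of admissible families with union size exactly $s$ is $O(n^{s})$, and summing over $s\le rt-\alpha$ the sum is dominated by its top term, giving $O\bigl(n^{rt-\alpha}\bigr)$ admissible families in total.

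Finally I would combine the two estimates. Since $r=O(1)$, we have $N=\binom{n}{r}=\Theta(n^{r})$, whence $\bigl(m/N\bigr)^t=O\bigl(m^{t}n^{-rt}\bigr)$. Therefore
\[
\mathbb{E}[X]=O\bigl(n^{rt-\alpha}\bigr)\cdot O\bigl(m^{t}n^{-rt}\bigr)=O\bigl(m^{t}n^{-\alpha}\bigr),
\]
which is the claimed bound.

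I do not expect a serious obstacle here, since this is a routine first-moment computation resting on Lemma~\ref{l2.1}. The only point demanding care is the enumeration step: one must verify that the number of admissible families is genuinely $O\bigl(n^{rt-\alpha}\bigr)$, with the combinatorial constant $C(r,t)$ absorbed into the $O(\cdot)$, and that the dominant contribution indeed comes from families whose union has the maximal admissible size $rt-\alpha$ rather than from families with smaller, more overlapping unions.
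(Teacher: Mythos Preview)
Your proposal is correct and is the standard first-moment argument for this kind of estimate. Note that the paper itself does not supply a proof of this lemma: it is quoted verbatim as Lemma~2.2 of \cite{mckay18}, so there is no in-paper argument to compare against, but your proof is precisely the one given there.
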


We will need the following Lemma~\ref{l2.6} from~\cite{green06}  to find the enumeration formula of
 $\mathcal{L}_r^\ell(n, m)$.

\begin{lemma}[\cite{green06}, Corollary~4.5]\label{l2.6}
Let $N\geqslant 2$ be an integer, and for $1\leqslant i\leqslant N$, let
real numbers $A(i)$, $B(i)$ be given such that
$A(i)\geqslant 0$ and $1-(i-1)B(i)\geqslant 0$. Define $A_1=\min_{i=1}^NA(i)$,
$A_2=\max_{i=1}^NA(i)$, $C_1=\min_{i=1}^NA(i)B(i)$
and $C_2=\max_{i=1}^NA(i)B(i)$. Suppose that there exists
a real number $\hat{c}$ with $0<\hat{c}< \frac{1}{3}$
such that $\max\{A/N,|C|\}\leqslant \hat{c}$ for all $A\in [A_1,A_2]$, $C\in[C_1,C_2]$.
Define $n_0$, $n_1$, $\cdots$, $n_N$ by $n_0=1$ and
\begin{align*}
\frac{n_i}{n_{i-1}}= \frac{A(i)}{i}(1-(i-1)B(i))
\end{align*}
for $1\leqslant i\leqslant N$, with the following interpretation: if $A(i)= 0$ or $1-(i-1)B(i)=0$, then $n_j=0$
for $i\leqslant j\leqslant N$. Then $\Sigma_1\leqslant \sum_{i=0}^{N}n_i\leqslant \Sigma_2$,
where $\Sigma_1=\exp[A_1- \frac{1}{2}A_1C_2]-(2e\hat{c})^N$ and
$\Sigma_2=\exp[A_2- \frac{1}{2}A_2C_1+ \frac{1}{2}A_2C_1^2]+(2e\hat{c})^N$.
\end{lemma}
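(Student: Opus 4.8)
The plan is to treat this as a purely analytic estimate on the explicitly defined sequence $(n_i)$, comparing it term by term with two reference sequences built from the extreme values $A_1,A_2,C_1,C_2$, and then summing. First I would rewrite the recurrence in product form: since $n_0=1$ and $n_i/n_{i-1}=\frac{A(i)}{i}\bigl(1-(i-1)B(i)\bigr)$, one gets $n_i=\frac{1}{i!}\prod_{j=1}^{i}a_j$ with $a_j:=A(j)\bigl(1-(j-1)B(j)\bigr)=A(j)-(j-1)A(j)B(j)$. Each $a_j\ge 0$ by hypothesis, and the degenerate convention (that $n_j=0$ once a factor $A(i)$ or $1-(i-1)B(i)$ vanishes) is exactly the statement that this product terminates, so I may assume all factors are positive throughout the relevant range.

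The heart of the argument is a constant-coefficient comparison. Writing $C(j)=A(j)B(j)\in[C_1,C_2]$ and $A(j)\in[A_1,A_2]$, I would bound $a_j$ from above and below by the affine expressions $A_2-(j-1)C_1$ and $A_1-(j-1)C_2$ (maximizing $A$ and minimizing $C$ for the upper bound, and the reverse for the lower). This yields the term-wise comparison $\underline n_i\le n_i\le\overline n_i$, where $\overline n_i=\frac{1}{i!}\prod_{j=0}^{i-1}(A_2-jC_1)=\binom{A_2/C_1}{i}C_1^{\,i}$ and similarly $\underline n_i=\binom{A_1/C_2}{i}C_2^{\,i}$. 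Summing these binomial series via $\sum_{i\ge 0}\binom{x}{i}y^i=(1+y)^x$ and expanding $\frac{1}{y}\log(1+y)=1-\frac{y}{2}+\frac{y^2}{3}-\cdots$ produces exponents $A_2\bigl(1-\frac{C_1}{2}+\frac{C_1^2}{3}-\cdots\bigr)$ and $A_1\bigl(1-\frac{C_2}{2}+\frac{C_2^2}{3}-\cdots\bigr)$. The hypothesis $|C|\le\hat c<\frac13$ then pins the tail of each series: a geometric majorant shows $\frac13-\frac{C}{4}+\frac{C^2}{5}-\cdots$ lies in $[0,\frac12]$, which converts the two exponents into exactly $A_1-\frac12A_1C_2$ (lower) and $A_2-\frac12A_2C_1+\frac12A_2C_1^2$ (upper), matching the exponents appearing in $\Sigma_1$ and $\Sigma_2$.

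The remaining ingredient is the passage from the truncated sum $\sum_{i=0}^{N}$ to the full series, and this is where the additive error $(2e\hat c)^N$ enters. I would bound each factor crudely by $|a_j|\le A_2+(j-1)|C(j)|\le 2N\hat c$ (using $A_2\le N\hat c$ and $|C(j)|\le\hat c$), so that $|n_i|\le (2N\hat c)^i/i!$; Stirling's inequality $i!\ge(i/e)^i$ then yields a geometric tail whose contribution beyond the truncation point is at most $(2e\hat c)^N$, absorbing both the discarded high-order terms of the reference series and the gap between the partial and full sums.

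The step I expect to be the main obstacle is controlling the sign behaviour of the reference sequences, most acutely in the lower bound: when $C_2>0$ the affine factors $A_1-jC_2$ turn negative once $j$ exceeds roughly $A_1/C_2$, so the naive term-wise inequality $n_i\ge\underline n_i$ can fail for large $i$. To handle this I would truncate the lower comparison at the last index where all factors remain nonnegative, use $n_i\ge 0$ beyond it, and verify that the discarded tail is again absorbed into the $(2e\hat c)^N$ term. The bookkeeping needed to show that these truncations are mutually consistent—and that the threshold $\hat c<\frac13$ simultaneously guarantees the geometric estimates above and the absolute convergence of the reference binomial series—is the delicate part of the argument.
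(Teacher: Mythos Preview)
The paper does not prove this lemma: it is quoted from Greenhill, McKay and Wang (reference~\cite{green06}, their Corollary~4.5) and used as a black box in the proof of Lemma~\ref{l6.2}. The surrounding text says explicitly ``We state them here for completeness.'' There is therefore no in-paper argument to compare your proposal against.

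For what it is worth, your outline is the natural route and matches in spirit the argument in the original source: sandwich each factor $a_j=A(j)-(j-1)A(j)B(j)$ between the extremal affine expressions $A_1-(j-1)C_2$ and $A_2-(j-1)C_1$, identify the reference products as generalized binomial coefficients, sum via the binomial series $(1+y)^x$ with $|y|\le\hat c<\tfrac13$, expand $\log(1+y)/y$ to the requisite order, and absorb the tail using the crude bound $n_i\le(2N\hat c)^i/i!\le(2e\hat c)^i$ for $i\ge N$. The sign difficulty you isolate in the final paragraph---that the lower reference factors $A_1-(j-1)C_2$ can become negative while the true $a_j$ stay nonnegative---is real and is precisely the place where the bookkeeping has to be done carefully; your proposed fix (truncate the lower comparison at the last nonnegative index and push the remainder into the $(2e\hat c)^N$ term) is the right one.
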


\section{Enumeration of $\mathcal{L}_r^\ell(n, m)$}\label{s:3}

In this section, we first consider the asymptotic enumeration formula for
$\mathcal{L}_r^\ell(n, m)$ as
$3\leqslant\ell \leqslant r-1$ and $m=o(n^{ \frac{\ell+1}{2}})$ to extend the case of
$\ell=2$ and $m=o(n^{ \frac{3}{2}})$ in~\cite{mckay18}. It turns out that the proof
is a little easier when $\ell\geqslant 3$, as only one type of clusters needs to be considered,
compared with four clusters in the case $\ell=2$.
We remark that
the proof follows along the same line of~\cite{valelec,green06,mckay18,tian2021} and
 we are only giving the details here for the
sake of self-completeness.

Let $\mathbb{P}(n,r,\ell; m)$ denote the probability that an
$r$-graph $H\in \mathcal{H}_r(n,m)$ chosen uniformly
 at random is a partial Steiner $(n,r,\ell)$-system. Then
 $|\mathcal{L}_r^\ell(n, m)|={\binom{N}{m}} \mathbb{P}(n,r,\ell; m)$.
Our task is reduced to show that $\mathbb{P}(n,r,\ell; m)$
equals the later factor in Theorem~\ref{t1.3}.

Let $\mathcal{L}_r^{\ell,+}(n, m)\subset\mathcal{H}_r(n,m)$
be the set of $r$-graphs $H$ which satisfy the following
properties $\bf(a)$ and $\bf(b)$. We show that  the  expected number of  $r$-graphs
 in $\mathcal{H}_r(n,m)$ not satisfying the properties of
 $\mathcal{L}_r^{\ell,+}(n, m)$ is quite small such that
the removal of these  $r$-graphs from our main proof will lead
to some simplifications.

$\bf(a)$\  Every cluster of $H$ consists of two edges overlapping by $\ell$ vertices (Figure 1).
%The intersection of any two edges contains at most $\ell$ vertices.

$\bf(b)$\  The number of clusters in $H$ is at most $M$, where
$M=\bigl\lceil\log n+ \frac{3^{\ell+2}r^{2\ell}m^2}{\ell!n^{\ell}}\bigr\rceil$.

%$H$ only contains one type of cluster that
%is shown in and the intersection of any two clusters  contains at most one vertex.
%(This implies that any three edges of $H$ involve at least
%$3r-2\ell+1$ vertices and any four edges involve at least $4r-2\ell-1$
%vertices. Thus, if there are two edges of $H$, for example $\{e_1,e_2\}$,
%such that $|e_1\cup e_2|=2r-\ell$, then $|e\cap (e_1\cup e_2)|\leqslant \ell-1$
%for any edge $e$ other than $\{e_1,e_2\}$ of $H$. Similarly,
%if there are four edges of $H$, for example $\{e_1,e_2, e_3,e_4\}$,
%such that $|e_1\cup e_2|=2r-\ell$ and $|e_3\cup e_4|=2r-\ell$,
%then $|(e_1\cup e_2)\cap (e_3\cup e_4)|\leqslant 1$.)
\begin{figure}[!htb]
\centering
\includegraphics[width=0.5\textwidth]{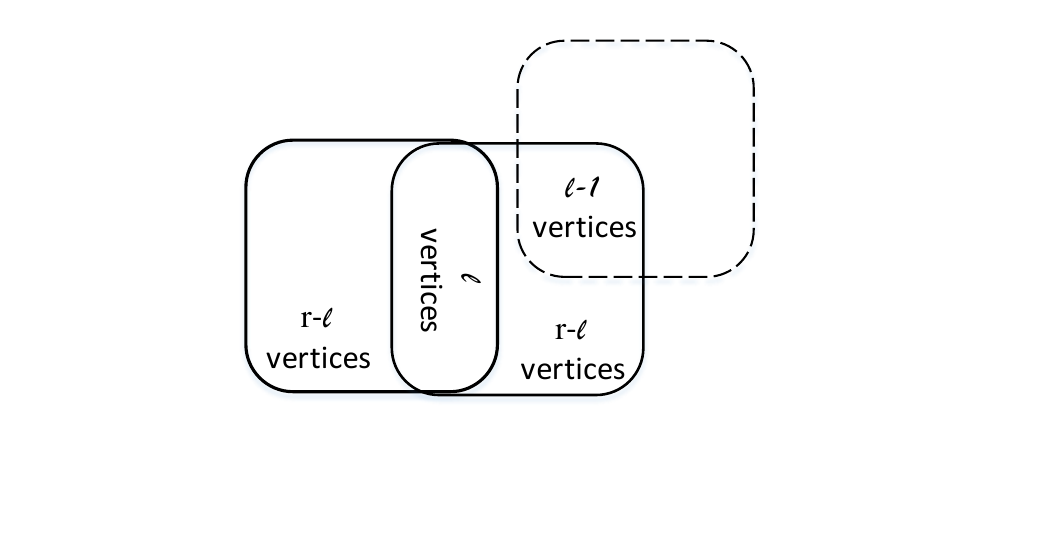}
\caption{The  cluster of $H\in\mathcal{L}_r^{\ell,+}(n,m)$.\label{fig:1}}
\end{figure}

%$\bf(c)$\ There are at most $M$  clusters in $H$.

\begin{lemma}\label{t3.2}
For any given integers $r$ and $\ell$ such that $3\leqslant\ell\leqslant r-1$, let $m=m(n)$ be integers with
 $m=o(n^{ \frac{\ell+1}{2}})$.
Then, as $n\rightarrow \infty$,
\begin{align*}
 \frac{|\mathcal{L}_r^{\ell,+}(n, m)|}
{|\mathcal{H}_r(n,m)|}=1-O\Bigl( \frac{m^2}{n^{\ell+1}}\Bigr).
\end{align*}
\end{lemma}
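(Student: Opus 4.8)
The plan is to estimate, via the first moment method, the expected number of $r$-graphs in $\mathcal{H}_r(n,m)$ that fail at least one of the properties $\mathbf{(a)}$, $\mathbf{(b)}$, $\mathbf{(c)}$, and to show this expectation is $O(m^2/n^{\ell+1})\cdot|\mathcal{H}_r(n,m)|$, which immediately gives the claimed ratio. Equivalently, working in the uniform probability space $\mathcal{H}_r(n,m)$, I would bound $\mathbb{P}[\mathbf{(a)}\text{ fails}] + \mathbb{P}[\mathbf{(b)}\text{ fails}] + \mathbb{P}[\mathbf{(c)}\text{ fails}]$ and show each term is $O(m^2/n^{\ell+1})$ (or smaller). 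The two workhorses are Lemma~\ref{l2.1} (a set of $t$ prescribed $r$-sets are all edges with probability at most $(m/N)^t$) and Lemma~\ref{l2.2} (the expected number of $t$-tuples of edges whose union spans at most $rt-\alpha$ vertices is $O(m^t n^{-\alpha})$).

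\textbf{Property (a).} A violation means some two edges share at least $\ell+1$ vertices, i.e.\ there are two edges $e_1,e_2$ with $|e_1\cup e_2|\leq 2r-\ell-1$, so this is exactly the case $t=2$, $\alpha=\ell+1$ of Lemma~\ref{l2.2}, giving expected number $O(m^2 n^{-\ell-1})$. Hence $\mathbb{P}[\mathbf{(a)}\text{ fails}]=O(m^2/n^{\ell+1})$.

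\textbf{Property (b).} Assuming $\mathbf{(a)}$ holds, any two linked edges meet in exactly $\ell$ vertices, so clusters are built from ``linked'' pairs. The forbidden configurations beyond the single allowed cluster type (Figure~1, i.e.\ a single pair of linked edges, plus clusters meeting in $\geq 2$ vertices) are: (i) three edges pairwise/chain-linked, which forces a union of at most $3r-2\ell$ vertices — Lemma~\ref{l2.2} with $t=3$, $\alpha = $ (appropriate value $\geq \ell+1$ since $3r - 3\ell \le 3r - 2\ell - (\ell)$ and $\ell\ge 3$... more carefully, three edges in a path-cluster span $3r-2\ell$ vertices, so $\alpha = 2\ell \ge \ell+1$), giving $O(m^3 n^{-2\ell}) = O(m^3/n^{2\ell})$; (ii) two disjoint linked pairs whose $(2r-\ell)$-vertex unions intersect in at least $2$ vertices, a $t=4$, $\alpha = 2\ell+1$ configuration (union at most $4r-2\ell-2$ vertices), giving $O(m^4 n^{-2\ell-1})$; (iii) a larger "star'' cluster where one edge is linked to two others through different $\ell$-sets — again covered by $t=3$ with a union of at most $3r-2\ell-1$ vertices. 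In every case the exponent of $n$ is at least $\ell+1$ and, since $m=o(n^{(\ell+1)/2})$ forces $m^3/n^{2\ell}=o(m^2/n^{\ell+1})$ and similarly for the $t=4$ terms, all these contributions are $O(m^2/n^{\ell+1})$. I would enumerate the finitely many ``excluded'' cluster shapes (they are $O(1)$ in number for fixed $r,\ell$) and apply Lemma~\ref{l2.2} to each; the bracketed remarks in the statement of $\mathbf{(b)}$ essentially list the consequences I need.

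\textbf{Property (c).} I would bound $\mathbb{P}[H$ has more than $M$ clusters$]$ by a union/Markov bound: the expected number of (ordered) families of $M$ pairwise vertex-disjoint linked pairs is at most $\binom{m}{2M}$-type count times $(m/N)^{2M}$ for the edges together with the combinatorial factor $\binom{n}{\ell}$ per link — more cleanly, the expected number of linked pairs in $H$ is $\Theta(m^2/n^\ell)$ by Lemma~\ref{l2.1} applied to the two edges plus the $\binom{n}{\ell}$ choices of shared $\ell$-set, call it $\mu = \Theta(r^{2\ell} m^2/(\ell! n^\ell))$; then the number of clusters $X$ satisfies $\mathbb{E}[X]=O(\mu)$, and since clusters are "almost independent'' one gets a Poisson-type tail $\mathbb{P}[X\ge M]\le e^{-\Omega(M)}$ once $M$ exceeds a constant times $\mu + \log n$, which is precisely how $M=\lceil\log n + 3^{\ell+2}r^{2\ell}m^2/(\ell! n^\ell)\rceil$ was chosen. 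A clean way is: $\mathbb{P}[X\ge M]\le \mathbb{E}[\binom{X}{M}]/\binom{M}{M}\le \mu^M/M!\le (e\mu/M)^M$; with $M\ge 3^{\ell+2}r^{2\ell}m^2/(\ell!n^\ell)\ge e\mu$ (for $n$ large) this is at most $2^{-M}\le 2^{-\log n}=n^{-\log 2}$, which is $o(m^2/n^{\ell+1})$ unless $m$ is very small, and when $m$ is small one uses the $\log n$ term in $M$ directly. Making the bound on $\mathbb{E}[\binom{X}{M}]$ rigorous (the dependence between overlapping potential clusters) via Lemma~\ref{l2.1} is the one genuinely fiddly point.

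\textbf{Main obstacle.} The routine parts are $\mathbf{(a)}$ and the individual cluster-shape estimates in $\mathbf{(b)}$, which are direct applications of Lemma~\ref{l2.2}. The delicate step is $\mathbf{(c)}$: one must (1) correctly identify, given $\mathbf{(a)}$ and $\mathbf{(b)}$, that every cluster is a single linked pair occupying exactly $2r-\ell$ vertices, and (2) control the factorial-moment bound $\mathbb{E}[\binom{X}{M}]\le \mu^M/M!$ rigorously — the subtlety being that distinct potential linked pairs can share edges or vertices, so one needs Lemma~\ref{l2.1} to bound the probability that $2M$ prescribed $r$-sets (with $\ge M$ prescribed $\ell$-set overlaps) all appear, and then check the combinatorial count of such configurations is $\le \mu^M/M! \cdot (1+o(1))$. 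Once these expectations are summed, the bound $1 - O(m^2/n^{\ell+1})$ follows since, by the constraint $m=o(n^{(\ell+1)/2})$, the $\mathbf{(a)}$ term $m^2/n^{\ell+1}$ dominates all others.
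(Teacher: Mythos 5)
Your proposal follows essentially the same route as the paper: a union bound over the failures of \textbf{(a)}, \textbf{(b)}, \textbf{(c)}, with Lemma~\ref{l2.2} handling \textbf{(a)} and the finitely many forbidden cluster shapes in \textbf{(b)}, and a first-moment count of families of edge-distinct linked pairs (via Lemma~\ref{l2.1}, exploiting the two terms in the definition of $M$) handling \textbf{(c)} — the paper phrases this last step as a direct count of $M+1$ ``paired-distinct'' links rather than as a factorial-moment bound, but the computation is the same. One arithmetic slip: for two clusters sharing at least two vertices you write $\alpha=2\ell+1$ and $O(m^4n^{-2\ell-1})$, which is \emph{not} $O(m^2/n^{\ell+1})$ under $m=o(n^{(\ell+1)/2})$ (the ratio is $m^2/n^{\ell}=o(n)$); your own description of the configuration (union at most $4r-2\ell-2$ vertices) gives $\alpha=2\ell+2$, hence $O(m^4/n^{2\ell+2})=O(m^2/n^{\ell+1})$ as in the paper, so the fix is immediate.
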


\begin{proof} Consider $H\in \mathcal{H}_r(n, m)$ chosen uniformly at random.
 We apply Lemma 2.2 several times
to show that $H$ satisfies the properties $\bf(a)$ and $\bf(b)$ with probability
$1-O( \frac{m^2}{n^{\ell+1}})$.

If two edges overlap by $\ell+1$ or more vertices, then they have at most $2r-\ell-1$ vertices
in total, which has probability $O( \frac{m^2}{n^{\ell+1}})$ by Lemma~2.2. Similarly
if there is a cluster of more than two edges, then three of those edges have at most
$3r-2\ell$ vertices in total, which has probability $O( \frac{m^3}{n^{2\ell}})=
O( \frac{m^2}{n^{\ell+1}})$ as $m=o(n^{ \frac{\ell+1}{2}})$ and $\ell\geqslant3$. Therefore,
$H$ satisfies the property $\bf(a)$ with probability $1-O( \frac{m^2}{n^{\ell+1}})$.

%Applying Lemma 2.2 with $t=2$ and $\alpha=\ell+1$, the expected number of two edges
%involving at most $2r-\ell-1$ vertices is $O( \frac{m^2}{n^{\ell+1}})$;
%with $t=3$ and $\alpha=2\ell$, the expected
%number of three edges involving at most $3r-2\ell$ vertices is $O( \frac{m^3}{n^{2\ell}})=
%O( \frac{m^2}{n^{\ell+1}})$ as $m=o(n^{ \frac{\ell+1}{2}})$ and $\ell\geqslant3$;
%with $t=4$ and $\alpha=2\ell+2$,  the expected number of
%four edges involving at most $4r-2\ell-2$ vertices is $O( \frac{m^4}{n^{2\ell+2}})
%=O( \frac{m^2}{n^{\ell+1}})$. Thus, $H$ satisfies the properties $\bf(a)$ and $\bf(b)$ with probability
%$1-O( \frac{m^2}{n^{\ell+1}})$.

Note that if $\bf(a)$ holds, all clusters have two edges and no two clusters share an edge or
a link. Define the event  $\mathcal{E}=
\{{\rm{There\ are\ at\ least}}\ d\  \text{edge- and } \text{link-disjoint } {\rm{clusters\ in}}\ H\}$,
where $d=M+1$.  Let $\{x_1^i,\cdots,x_{\ell}^i\}\subseteq \binom{[n]}{\ell}$ be a set of
links with edges $e_i$ and $e_i'$ for $1\leqslant i\leqslant d$.
By Lemma~\ref{l2.1}, we have
\begin{align*}
\mathbb{P}[\mathcal{E}]=O\biggl(\binom{n}{r-\ell}^{2d}
\binom{\binom{n}{\ell}}{d}\Bigl( \frac{m}{N_0}\Bigr)^{2d}\biggr)
=O\biggl(\biggl( \frac{r^{2\ell}e m^2}{d\ell!n^{\ell}}\biggr)^{d}\biggr)
=O\Bigl( \frac{1}{n^{\ell+1}}\Bigr),
\end{align*}
where the last two equalities are true because
$d> \frac{3^{\ell+2}r^{2\ell}m^2}{\ell!n^{\ell}}$ and $d>\log n$.
The proof is complete on noting that the event ``$\bf(a)$ and $\bf(b)$"
is contained in the union of the event ``$\bf(a)$ holds" and ``$\mathcal{E}$ doesn't hold".
\end{proof}

For a nonnegative integer $t$, define
$\mathcal{L}_r^{\ell,+}(t)$ to
be the set of $r$-graphs $H\in \mathcal{S}_r^{\ell,+}(n, m)$
with exactly $t$ clusters and
we have  $|\mathcal{L}_r^{\ell,+}(n, m)|=\sum_{t=0}^{M}|\mathcal{L}_r^{\ell,+}(t)|$.
By Lemma~\ref{t3.2}, we have $|\mathcal{L}_r^{\ell,+}(n, m)|\neq0$ and
there exists $t$  such that $|\mathcal{L}_r^{\ell,+}(t)|\neq0$.
Note that
$\mathcal{L}_r^\ell(n, m)=\mathcal{L}_r^{\ell,+}(0)\neq\emptyset$,
then it follows that
\begin{align}\label{e3.1}
 \frac{1}{\mathbb{P}(n,r,\ell; m)}&=\Bigl(1-O\Bigl( \frac{m^2}{n^{\ell+1}}\Bigr)\Bigr)
 \sum_{t=0}^{M} \frac{|\mathcal{L}_r^{\ell,+}(t)|}{|\mathcal{L}_r^\ell(n, m)|}
 =\Bigl(1-O\Bigl( \frac{m^2}{n^{\ell+1}}\Bigr)\Bigr)
 \sum_{t=0}^{M} \frac{|\mathcal{L}^{\ell,+}(t)|}{|\mathcal{L}^{\ell,+}(0)|}.
\end{align}
In order to calculate the ratio
$|\mathcal{L}_r^{\ell,+}(t)|/|\mathcal{L}_r^{\ell,+}(0)|$
when $1\leqslant t\leqslant M$. We design switchings to find a relationship between the sizes of
 $\mathcal{L}_r^{\ell,+}(t)$ and $\mathcal{L}_r^{\ell,+}(t-1)$.
Let $H\in \mathcal{L}_r^{\ell,+}(t)$. A {\it forward switching} from $H$ is used to
reduce the number of clusters in $H$. Take any cluster $\{e,f\}$ and remove it from $H$.
Define $H_0$ with the same vertex set $[n]$ and the edge set $E(H_0)=E(H)\setminus \{e,f\}$.
Take any $r$-set from $[n]$ of which no $\ell$ vertices belong to the same edge of $H_0$ and
add it as a new edge. The graph is denoted by $H'$. Insert another new edge  at an $r$-set of $[n]$ again of which
no $\ell$ vertices belong to the same edge of $H'$. The resulting graph is denoted by $H''$.
The two new edges in forward switching may have at most $\ell-1$ vertices in common and
the operation reduces the number of clusters in $H$ by one.
A {\it reverse switching} is the reverse of a  forward switching.
A  reverse switching from $H''\in \mathcal{L}_r^{\ell,+}(t-1)$
is defined by sequentially removing  two edges of $H''$ not containing a link, then
choosing a $(2r-\ell)$-set $T$ from $[n]$ such that no $\ell$
vertices belong to any remaining edge of $H''$, then inserting two edges into $T$
such that they create a cluster.

\begin{lemma}\label{l5.3}
For any given integers $r$ and $\ell$ such that $3\leqslant\ell\leqslant r-1$, let  $m=m(n)$ be an integer with
$m=o(n^{ \frac{\ell+1}{2}})$. Let $t$ be a positive integer
with $1\leqslant t\leqslant M$.\\
$(a)$\ Let $H\in \mathcal{L}_r^{\ell,+}(t)$. The number of forward switchings for $H$ is
$t N_0^2
(1+O( \frac{m}{n^{\ell}}))$.

\noindent$(b)$\ Let $H''\in \mathcal{L}_r^{\ell,+}(t-1)$. The number of  reverse switchings for $H''$ is
$ \frac{(2r-\ell)!}{\ell!(r-\ell)!^2} \binom{m-2(t-1)}{2}
\binom{n}{2r-\ell}(1+O( \frac{m}{n^{\ell}}))$.
\end{lemma}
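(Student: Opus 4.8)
The plan is to count, for each of the two switching operations, the number of ways to perform it by breaking the choice into independent stages and then estimating the overcount caused by ``forbidden'' configurations using the structural control of $\mathcal{S}^+(n,r,\ell;m)$ together with Lemma~\ref{l2.2}. For part $(a)$, fix $H\in\mathcal{S}^{+}(t)$. A forward switching first selects a cluster $\{e,f\}$ to delete: since $H$ has exactly $t$ clusters, this gives a factor $t$. We then insert, sequentially, two new $r$-edges, each of which must avoid creating an $\ell$-set in two edges (the ``legal'' condition in the definition). The crude count for the two insertions is $N^2$, since there are $N=\binom{n}{r}$ choices for each $r$-set. The task is to show the legality constraints cost only a factor $1+O(m/n^\ell)$. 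First I would show that the number of $r$-sets that are \emph{illegal} as a first insertion into $H_0$ (i.e. that would contain an $\ell$-subset already lying in an edge of $H_0$) is $O(m\cdot n^{r-\ell})=O(Nm/n^\ell)$: each of the $\le m$ edges of $H_0$ has $\binom{r}{\ell}=O(1)$ many $\ell$-subsets, and an $r$-set containing a given $\ell$-subset is determined by the remaining $r-\ell$ vertices. The same bound holds for the second insertion into $H'$. We also must exclude the degenerate cases where the two inserted edges coincide or meet in $\ell$ or more vertices (which would spoil being a member of $\mathcal{S}^+$), but those are an even smaller fraction, $O(n^{r-\ell})=o(Nm/n^\ell)$ or better. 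Hence the number of forward switchings is $t\,N^2\bigl(1-O(m/n^\ell)\bigr)^2 = tN^2\bigl(1+O(m/n^\ell)\bigr)$, as claimed.

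For part $(b)$, fix $H''\in\mathcal{S}^{+}(t-1)$, which has $m-2(t-1)$ edges not lying in any cluster and $2(t-1)$ edges inside clusters. A reverse switching first removes an ordered pair of edges not containing a link; the number of edges not in a link is $m-2(t-1)$, so the number of \emph{ordered} pairs is $[m-2(t-1)]_2$, i.e.\ $2\binom{m-2(t-1)}{2}$ (I would be careful here about whether the switching is defined on ordered or unordered pairs, and reconcile the factor of $2$ with the $(2r-\ell)!/(\ell!\,(r-\ell)!^2)$ coefficient below, which already encodes an ordering of the two halves of $T$). Next we choose the $(2r-\ell)$-set $T$ avoiding the remaining edges: there are $\binom{n}{2r-\ell}$ choices of a $(2r-\ell)$-set, and by the same illegality estimate as above, a $1-O(m/n^{\ell})$ fraction of them are legal. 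Finally, given $T$, we must count the ways to split it into two $r$-edges $e,f$ with $|e\cap f|=\ell$ so that $\{e,f\}$ forms a cluster of the required type (Figure~\ref{fig:1}): choose the $\ell$-set $e\cap f$ inside $T$, then partition the remaining $2r-2\ell$ vertices of $T$ into the two blocks of size $r-\ell$, giving $\binom{2r-\ell}{\ell}\binom{2r-2\ell}{r-\ell} = \frac{(2r-\ell)!}{\ell!\,(r-\ell)!^2}$ ordered choices (this is where the stated coefficient comes from). Multiplying the four factors and absorbing the verification that the resulting graph genuinely lies in $\mathcal{S}^{+}(t)$ — which holds outside an $O(m/n^\ell)$ fraction by properties $\mathbf{(a)}$–$\mathbf{(b)}$ and Lemma~\ref{l2.2} — gives $\frac{(2r-\ell)!}{\ell!\,(r-\ell)!^2}\binom{m-2(t-1)}{2}\binom{n}{2r-\ell}\bigl(1+O(m/n^\ell)\bigr)$, possibly up to the factor-of-$2$ bookkeeping noted above, which I would settle by fixing once and for all the convention (ordered vs.\ unordered) for both directions so that the switching is a genuine bijection-with-multiplicities between $\mathcal{S}^{+}(t)$ and $\mathcal{S}^{+}(t-1)$.

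The main obstacle I anticipate is not any single estimate — each illegality count is a one-line union bound — but rather the careful accounting of \emph{which} near-degenerate configurations must be forbidden so that a forward switching always lands in $\mathcal{S}^{+}(t-1)$ and a reverse switching always lands in $\mathcal{S}^{+}(t)$, and showing that every such bad event has probability $O(m/n^{\ell})$ (or smaller) so that it can be folded into the error term. Concretely, after inserting the new edges one must rule out: the new edges forming an $\ell$-overlap with an \emph{old} edge (violating $\mathbf{(a)}$), two clusters sharing $\ge 2$ vertices, and the creation of a configuration of three or four edges on too few vertices (violating $\mathbf{(b)}$); each of these is exactly the kind of event controlled by Lemma~\ref{l2.2} with the appropriate $t$ and $\alpha$, with the relevant $r$-graph here having $O(m)$ edges, so the counts are $O(m^j n^{-\alpha_j})$ and in every case $o(N^2 m/n^\ell)$ or $o\bigl(m^2 n^{2r-\ell}\bigr)$ respectively. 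Once these are all collected, the two counts are as stated, and dividing them will (in the next step of the paper) yield the recursion $|\mathcal{S}^{+}(t)|/|\mathcal{S}^{+}(t-1)|$ feeding into Lemma~\ref{l2.6}.
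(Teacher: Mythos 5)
Your proposal is correct and follows essentially the same route as the paper: count the forward switchings as (choice of cluster) $\times$ (two sequential legal insertions, each $N$ minus the $O(\binom{r}{\ell}m\binom{n-\ell}{r-\ell})=O(Nm/n^{\ell})$ forbidden $r$-sets), and the reverse switchings as (ordered pair of link-free edges) $\times$ (legal $(2r-\ell)$-set $T$) $\times$ (ways to split $T$ into a cluster), with the factor-of-$2$ reconciled exactly as you suspect — the paper uses $2\binom{m-2(t-1)}{2}$ ordered deletions against $\tfrac12\binom{2r-\ell}{\ell}\binom{2r-2\ell}{r-\ell}$ unordered cluster creations. Your extra attention to the near-degenerate configurations (e.g.\ the new cluster meeting an old one in $\ge 2$ vertices) is if anything more careful than the paper, which does not address them explicitly; these contribute $O(M/n^{2})=o(1/n)$ relatively and are harmless downstream.
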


\begin{proof} $(a)$\ Let $H\in \mathcal{L}_r^{\ell,+}(t)$.
Let $\mathcal{R}(H)$ be the set of all forward switchings which can be applied to $H$.
There are exactly $t$ ways to choose a  cluster. The number of $r$-sets to insert the new edge is
at most $N_0$. From this we subtract the $r$-sets that have $\ell$ vertices belong to some
other edge of $H$, which is at most $ \binom{r}{\ell} m \binom{n-\ell}{r-\ell}=O( \frac{m}{n^\ell}) N_0$.
Thus, in each step of the forward switching, there are $N_0
(1+O( \frac{m}{n^{\ell}}))$ ways to choose the new edge and
we have $|\mathcal{R}(H)|=t N_0^2
(1+O( \frac{m}{n^{\ell}}))$.

$(b)$\ Conversely, suppose that $H''\in \mathcal{L}_r^{\ell,+}(t-1)$.
Similarly, let $\mathcal{R}'(H'')$ be the set of all reverse switchings for $H''$.
There are exactly $2\binom{m-2(t-1)}{2}$ ways to delete two edges in sequence such that
neither of them contain a link.  There are at most
$\binom{n}{2r-\ell}$ ways to choose a $(2r-\ell)$-set $T$ from $[n]$.
From this, we subtract the $(2r-\ell)$-sets
 that have $\ell$ vertices belong to some
other edge of $H''$, which is at most $\binom{r}{\ell} m \binom{n-\ell}{2r-2\ell}
=O( \frac{m}{n^\ell}) \binom{n}{2r-\ell}$.
For every $T$, there are $ \frac{1}{2} \binom{2r-\ell}{\ell} \binom{2r-2\ell}{r-\ell}$ ways to create a
 cluster in $T$.
 Thus, we have $|\mathcal{R}'(H'')|=\binom{2r-\ell}{\ell}
 \binom{2r-2\ell}{r-\ell} \binom{m-2(t-1)}{2} \binom{n}{2r-\ell}
(1+O( \frac{m}{n^\ell}))$.
\end{proof}

\begin{corollary}\label{c5.4}
With notation as above, for some $1\leqslant t\leqslant M$, the following hold: \\
$(a)$\ $|\mathcal{L}_r^{\ell,+}(t)|>0$ if and only if $m\geqslant 2t$.
\\
$(b)$\ Let $t'$ be the first value of $t\leqslant M$
such that $\mathcal{L}_r^{\ell,+}(t)=\emptyset$,
or $t'=M+1$ if no such value exists. %Suppose that $n\rightarrow\infty$.
Then, as $n\rightarrow\infty$, uniformly for $1\leqslant t< t'$,
\begin{align*}
 \frac{|\mathcal{L}_r^{\ell,+}(t)|}{|\mathcal{L}_r^{\ell,+}(t-1)|}=
 \binom{m-2(t-1)}{2} \frac{[r]_\ell^2}{\ell!t n^{\ell}}
\Bigl(1+O\Bigl( \frac{1}{n}\Bigr)\Bigr).
\end{align*}
\end{corollary}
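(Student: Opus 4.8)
\textit{Plan.} The corollary is an immediate consequence of Lemma~\ref{l5.3} via the standard switching counting identity. The idea is to count, in two ways, the number of pairs $(H,H'')$ with $H\in\mathcal{S}^{+}(t)$, $H''\in\mathcal{S}^{+}(t-1)$, that are related by a forward switching. For part $(a)$, I would first argue that $\mathcal{S}^{+}(t)\neq\emptyset$ forces $m\geq 2t$: a graph with $t$ clusters contains $t$ pairwise-vertex-disjoint (in the edge sense) pairs of linked edges, hence at least $2t$ edges, so $m\geq 2t$ is necessary. Conversely, when $m\geq 2t$ one exhibits an explicit member of $\mathcal{S}^{+}(t)$ — e.g. place $t$ disjoint clusters on disjoint vertex sets (possible since $2t(2r-\ell)=O(m)=o(n)$, using $m=o(n^{(\ell+1)/2})$ and $t\leq M=O(\log n + m^2/n^\ell)$, so there is room), then add $m-2t$ further edges each meeting the current hypergraph in at most $\ell-1$ vertices; greedily this is possible since the number of forbidden $r$-sets at each step is $O(m n^{-\ell})N = o(N)$. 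One must also check properties $\bf(a)$–$\bf(c)$ hold for this construction, which is routine given the disjointness and the bound $t\leq M$.

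\textit{Main argument for part $(b)$.} Fix $t$ with $1\leq t<t'$, so that both $\mathcal{S}^{+}(t)$ and $\mathcal{S}^{+}(t-1)$ are nonempty (by the definition of $t'$ and part $(a)$, together with monotonicity in the "$m\geq 2t$" condition). Counting the edge-coloured bipartite graph between $\mathcal{S}^{+}(t)$ and $\mathcal{S}^{+}(t-1)$ whose edges are forward switchings: summing over $H\in\mathcal{S}^{+}(t)$ the number of forward switchings gives, by Lemma~\ref{l5.3}$(a)$,
\[
\bigl|\mathcal{S}^{+}(t)\bigr|\cdot t N^2\Bigl(1+O\bigl(\tfrac{m}{n^{\ell}}\bigr)\Bigr);
\]
summing over $H''\in\mathcal{S}^{+}(t-1)$ the number of reverse switchings gives, by Lemma~\ref{l5.3}$(b)$,
\[
\bigl|\mathcal{S}^{+}(t-1)\bigr|\cdot\frac{(2r-\ell)!}{\ell!\,(r-\ell)!^2}\binom{m-2(t-1)}{2}\binom{n}{2r-\ell}\Bigl(1+O\bigl(\tfrac{m}{n^{\ell}}\bigr)\Bigr).
\]
Here one needs that every forward switching applied to an $H\in\mathcal{S}^{+}(t)$ actually lands in $\mathcal{S}^{+}(t-1)$ — i.e. the result still satisfies $\bf(a)$–$\bf(c)$ — and, conversely, that the reverse switchings counted in Lemma~\ref{l5.3}$(b)$ are exactly the inverses of forward switchings; this is where the parenthetical remarks under property $\bf(b)$ about the forced structure of unions of two, three and four edges do the work, guaranteeing that inserting two new edges forming a cluster (resp. two cluster-free edges) cannot accidentally create or destroy other clusters or violate $\bf(a)$. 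Equating the two counts and dividing,
\[
\frac{\bigl|\mathcal{S}^{+}(t)\bigr|}{\bigl|\mathcal{S}^{+}(t-1)\bigr|}
=\frac{(2r-\ell)!}{\ell!\,(r-\ell)!^2}\cdot\frac{\binom{m-2(t-1)}{2}\binom{n}{2r-\ell}}{t N^2}\Bigl(1+O\bigl(\tfrac{m}{n^{\ell}}\bigr)\Bigr).
\]
It then remains to simplify the $n$-dependent factor: $\binom{n}{2r-\ell}/N^2 = \binom{n}{2r-\ell}/\binom{n}{r}^2$, and $\frac{(2r-\ell)!}{\ell!(r-\ell)!^2}\binom{n}{2r-\ell}/\binom{n}{r}^2 = \frac{[r]_\ell^2}{\ell!\,n^\ell}(1+O(1/n))$ by a direct expansion of the falling factorials (the leading term is $\binom{r}{\ell}^2\ell!\cdot n^{2r-\ell}/n^{2r}\cdot(1+O(1/n)) = [r]_\ell^2/(\ell!\,n^\ell)\cdot(1+O(1/n))$). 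Since $m/n^\ell = O(1/n)$ under the hypothesis $m=o(n^{(\ell+1)/2})$ (indeed $m/n^\ell = o(n^{(1-\ell)/2})=o(1/n)$ for $\ell\geq 3$), the two error terms merge into $1+O(1/n)$, giving the claimed formula. Uniformity over $1\leq t<t'$ is automatic because all $O(\cdot)$ constants in Lemma~\ref{l5.3} are uniform in $t$.

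\textit{Expected main obstacle.} The only non-mechanical point is the bijection-type bookkeeping: verifying that a forward switching from $\mathcal{S}^{+}(t)$ always produces a graph in $\mathcal{S}^{+}(t-1)$ (not merely an $r$-graph with $t-1$ clusters) and that the reverse switchings of Lemma~\ref{l5.3}$(b)$ are precisely the inverses, with no double-counting. This is exactly what properties $\bf(a)$–$\bf(c)$ and the structural consequences noted after $\bf(b)$ are designed to control, so the argument is a careful but routine case-check rather than a genuine difficulty; the arithmetic simplification of the binomial ratios is elementary.
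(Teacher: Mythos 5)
Your proposal is correct, and for part $(b)$ — the heart of the corollary — it is essentially the paper's proof: double-count the forward/reverse switching pairs between $\mathcal{S}^{+}(t)$ and $\mathcal{S}^{+}(t-1)$ using Lemma~\ref{l5.3}, simplify $\frac{(2r-\ell)!}{\ell!(r-\ell)!^2}\binom{n}{2r-\ell}/N^2=\frac{[r]_\ell^2}{\ell! n^\ell}(1+O(1/n))$, and absorb $O(m/n^\ell)$ into $O(1/n)$ via $m=o(n^{(\ell+1)/2})$ and $\ell\geq 3$; your explicit verification of the binomial arithmetic and of the "switchings stay inside $\mathcal{S}^{+}$" bookkeeping is exactly the intended (and in the paper, largely implicit) content. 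The one place you genuinely diverge is the sufficiency direction of part $(a)$: the paper does not build an explicit witness but instead takes some $\hat{t}\leq M$ with $\mathcal{S}^{+}(\hat{t})\neq\emptyset$ (guaranteed by Lemma~\ref{t3.2}) and walks from $\hat{t}$ to $t$ along forward/reverse switchings, using that the counts in Lemma~\ref{l5.3} are strictly positive at every step whenever $m\geq 2t$. Your direct construction (plant $t$ vertex-disjoint clusters, then greedily add $m-2t$ cluster-free edges) is more self-contained and avoids relying on the switching counts being positive, at the cost of having to re-verify properties $\bf(a)$--$\bf(c)$ by hand; the paper's route reuses machinery already in place. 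One small slip: the chain $2t(2r-\ell)=O(m)=o(n)$ is not valid since $m$ may be as large as $o(n^{(\ell+1)/2})\gg n$; the correct justification, which you also supply, is $t\leq M=O(\log n+m^2/n^\ell)=o(n)$.
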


\begin{proof} $(a)$\ Firstly, $m\geqslant 2t$ is a necessary condition for $|\mathcal{L}_r^{\ell,+}(t)|>0$.
By Lemma~\ref{t3.2}, there is some $0\leqslant \hat{t}\leqslant M$ such that
$\mathcal{L}_r^{\ell,+}(\hat{t})\neq\emptyset$. We can move $\hat{t}$ to $t$ by a sequence of forward
and reverse switchings while no greater than $M$.
Note that the values given in Lemma~\ref{l5.3} at each
step of this path are positive, we have $|\mathcal{L}_r^{\ell,+}(t)|>0$.

$(b)$\ By $(a)$,  if $\mathcal{L}_r^{\ell,+}(t)=\emptyset$, then
$\mathcal{L}_r^{\ell,+}(t+1),\cdots,\mathcal{L}_r^{\ell,+}(M) =\emptyset$.
By the definition of $t'$, the left hand ratio is well defined.
By Lemma~\ref{l5.3},  we complete the proof of $(b)$, where $O( \frac{m}{n^{\ell}})$
is absorbed into $O( \frac{1}{n})$ as $m=o(n^{ \frac{\ell+1}{2}})$ and $\ell\geqslant 3$.
\end{proof}

At last, by Lemma~\ref{l2.6}, we estimate
$\sum_{t=0}^{M} \frac{|\mathcal{L}_r^{\ell,+}(t)|}{|\mathcal{L}_r^{\ell,+}(0)|}$
in~\eqref{e3.1} to finish the proof of Theorem~\ref{t1.3}.

\begin{lemma}\label{l6.2}
For any given integers $r$ and $\ell$ such that $3\leqslant\ell\leqslant r-1$, let $m=m(n)$ be an
integer with $m=o(n^{ \frac{\ell+1}{2}})$.
With notation as above, as $n\rightarrow\infty$,
\begin{align*}
\sum_{t=0}^{M} \frac{|\mathcal{L}_r^{\ell,+}(t)|}{|\mathcal{L}_r^{\ell,+}(0)|}
=&\exp\biggl[ \frac{[r]_\ell^2[m]_2}{2\ell! n^{\ell}}+O\Bigl( \frac{m^2}{n^{\ell+1}}\Bigr)\biggr].
\end{align*}
\end{lemma}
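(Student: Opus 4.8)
The plan is to apply the product estimate of Lemma~\ref{l2.6} to the sequence $n_t:=|\mathcal{S}^{+}(t)|/|\mathcal{S}^{+}(0)|$, $0\le t\le M$; note $n_0=1$ since $\mathcal{S}^{+}(0)=\mathcal{S}(n,r,\ell;m)\neq\emptyset$, and by Corollary~\ref{c5.4}(a) one has $n_t>0$ exactly for $0\le t\le t'-1$. The first task is to recast the ratio of Corollary~\ref{c5.4}(b) into the rigid shape $\frac{A(t)}{t}\bigl(1-(t-1)B(t)\bigr)$ demanded by Lemma~\ref{l2.6}. Put $P(t):=(m-2(t-1))(m-1-2(t-1))$, so that $2\binom{m-2(t-1)}{2}=P(t)$ and $P(1)=m(m-1)=[m]_2$. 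Expanding gives the identity $P(1)-P(t)=2(t-1)(2m-2t+1)$, so setting
\[
B(t):=\frac{2(2m-2t+1)}{m(m-1)}
\]
makes $B(t)$ \emph{linear} in $t$, well defined for every $t$, with $1-(t-1)B(t)=P(t)/[m]_2$. Taking $A(t):=\frac{[r]_\ell^2[m]_2}{2\ell!\,n^\ell}\bigl(1+O(n^{-1})\bigr)$ for $1\le t\le t'-1$ (the $O(n^{-1})$ being the error term of Corollary~\ref{c5.4}(b)) then reproduces $n_t/n_{t-1}=\frac{A(t)}{t}(1-(t-1)B(t))$; at $t=t'$ one has $1-(t'-1)B(t')=0$, because $\mathcal{S}^{+}(t'-1)\neq\emptyset=\mathcal{S}^{+}(t')$ forces $m\in\{2t'-2,\,2t'-1\}$, so the interpretation clause of Lemma~\ref{l2.6} correctly puts $n_t=0$ for $t\ge t'$ and the values of $A,B$ beyond $t'-1$ (kept as the same formulas) are immaterial.

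The second task is to verify the hypotheses of Lemma~\ref{l2.6} with its ``$N$'' equal to $M$. One has $A(t)\ge 0$, and $1-(t-1)B(t)=P(t)/[m]_2\ge 0$ since $m-2(t-1)$ and $m-1-2(t-1)$ are consecutive integers. Writing $\mu:=\frac{[r]_\ell^2[m]_2}{2\ell!\,n^\ell}$, we obtain $A_1,A_2=\mu(1+O(n^{-1}))$ and $A(t)B(t)=\frac{[r]_\ell^2(2m-2t+1)}{\ell!\,n^\ell}(1+O(n^{-1}))$, so $C_1,C_2=O\bigl((m+M)n^{-\ell}\bigr)=o(1)$, using $m=o(n^{\ell})$. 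For the constant $\hat c$, since $[r]_\ell\le r^\ell$ and $M\ge\frac{3^{\ell+2}r^{2\ell}m^2}{\ell!\,n^\ell}$,
\[
\frac{A(t)}{M}\le\frac{[r]_\ell^2 m^2/(2\ell!\,n^\ell)}{3^{\ell+2}r^{2\ell}m^2/(\ell!\,n^\ell)}\bigl(1+o(1)\bigr)\le\frac{1+o(1)}{2\cdot 3^{\ell+2}},
\]
so one may take $\hat c=\frac{1}{2\cdot 3^{\ell+2}}+o(1)<\frac13$. This is the only place the specific constant in the definition of $M$ is used; it also gives $2e\hat c\le \frac{e}{3^{\ell+2}}+o(1)$, whence $(2e\hat c)^M=O\bigl(n^{1-(\ell+2)\log 3}\bigr)=o\bigl(n^{-(\ell+1)}\bigr)=o(m^2/n^{\ell+1})$, using $M\ge\log n$ and $\log 3>1$.

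The last task is to read off the conclusion. Lemma~\ref{l2.6} gives $\Sigma_1\le\sum_{t=0}^{M}n_t\le\Sigma_2$ with $\Sigma_1=\exp[A_1-\tfrac12A_1C_2]-(2e\hat c)^M$ and $\Sigma_2=\exp[A_2-\tfrac12A_2C_1+\tfrac12A_2C_1^2]+(2e\hat c)^M$. Here $A_i=\mu+O(\mu/n)=\mu+O(m^2/n^{\ell+1})$, while the cross terms satisfy $A_iC_j=O\bigl(\frac{m^2(m+M)}{n^{2\ell}}\bigr)$ and $A_iC_j^2=O\bigl(\frac{m^2(m+M)^2}{n^{3\ell}}\bigr)$; since $\ell\ge 3$ the hypothesis $m=o(n^{(\ell+1)/2})$ yields $m+M=o(n^{\ell-1})$, and hence both quantities are $O(m^2/n^{\ell+1})$. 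Combining these bounds with $(2e\hat c)^M=o(m^2/n^{\ell+1})$ and $\mu\ge 0$ (so the prefactors $e^{-A_i+O(\cdot)}$ are $O(1)$), one gets $\Sigma_1=\Sigma_2=\exp\bigl[\mu+O(m^2/n^{\ell+1})\bigr]$, which is exactly the asserted formula.

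The main obstacle is the bookkeeping in the middle step: coaxing the ratio of Corollary~\ref{c5.4}(b) into the prescribed form with $B(t)$ linear in $t$ and $0\le 1-(t-1)B(t)$, and then checking — via the constant $3^{\ell+2}$ built into $M$ — both that $\hat c<\tfrac13$ and that \emph{every} error term ($A_iC_j$, $A_iC_j^2$, and $(2e\hat c)^M$) is absorbed into $O(m^2/n^{\ell+1})$. It is precisely this absorption that forces the hypothesis $\ell\ge 3$, equivalently $m=o(n^{\ell-1})$.
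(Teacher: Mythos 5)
Your proof is correct and follows essentially the same route as the paper: the same sequence $n_t$, the same $A(t)$ and $B(t)=\frac{2(2m-2t+1)}{m(m-1)}$, the same $\hat c=\frac{1}{2\cdot 3^{\ell+2}}$, and the same invocation of Lemma~\ref{l2.6} with the same error absorption (your only departure is keeping the linear formula for $B(t)$ beyond $t'$, where the paper switches to $(t-1)^{-1}$; both work). The one point you skip is the degenerate case $m\le 1$ (equivalently $t'=1$), where your $B(t)$ has $m(m-1)=0$ in the denominator --- the paper disposes of this separately by noting the claim is then trivial.
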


\begin{proof} Let $t'$ be as defined in Lemma~\ref{c5.4}(b) and we have shown
$|\mathcal{L}_r^{\ell,+}(0)|=|\mathcal{L}_r^\ell(n, m)|\neq0$, then $t'\geqslant 1$. But if
$t'=1$, by Lemma~\ref{c5.4}(a), we have $m<2$ and the conclusion is obviously true.
In the following, suppose $t'\geqslant 2$. Define $n_{0},\cdots,n_{M}$ by $n_{0}=1$,
$n_{t}= |\mathcal{L}_r^{\ell,+}(t)|/|\mathcal{L}_r^{\ell,+}(0)|$
for $1\leqslant t<t'$ and $n_{t}=0$ for $t'\leqslant t\leqslant M$.
By Lemma~\ref{c5.4}(b), for $1\leqslant t< t'$, we have
\begin{equation}\label{e4.1}
\begin{aligned}[b]
 \frac{n_{t}}{n_{t-1}}&= \frac{1}{t}
\binom{m-2(t-1)}{2} \frac{[r]_\ell^2}{\ell! n^{\ell}}
\Bigl(1+O\Bigl( \frac{1}{n}\Bigr)\Bigr).
\end{aligned}
\end{equation}
For $1\leqslant t\leqslant M$, define
\begin{equation}\label{e3.32}
\begin{aligned}[b]
A(t)&=  \frac{[r]_\ell^2[m]_2}{2\ell! n^{\ell}}
\Bigl(1+O\Bigl( \frac{1}{n}\Bigr)\Bigr),\\
B(t)&=\begin{cases} \frac{2(2m-2t+1)}{m(m-1)},\text{for}\ 1\leqslant t<t';\\(t-1)^{-1},\text{otherwise}.\end{cases}
\end{aligned}
\end{equation}
As the equations shown in~\eqref{e4.1} and~\eqref{e3.32}, we further have $ \frac{n_{{t}}}{n_{t-1}}=
 \frac{A(t)}{t}(1-(t-1)B(t))$.

Following the notation of Lemma~\ref{l2.6}, we have
$A_1,A_2= \frac{[r]_\ell^2[m]_2}{2\ell! n^{\ell}}
(1+O( \frac{1}{n}))$. For $1\leqslant t< t'$, we have
$A(t)B(t)= \frac{[r]_\ell^2(2m-2t+1)}{\ell! n^{\ell}}
(1+O( \frac{1}{n}))$.
Thus, we have $A(t)B(t)= O( \frac{m}{n^\ell})$ for
$1\leqslant t< t'$. For the case $t'\leqslant t\leqslant M$ and $t'\geqslant 2$, by Lemma~\ref{c5.4}(a),
we have $2\leqslant m<2t$. As the equation shown in~\eqref{e3.32}, we also have
$A(t)B(t)= O( \frac{m}{n^\ell})$ for
$t'\leqslant t\leqslant M$. In both cases, we have $C_1,C_2=O( \frac{m}{n^\ell})$.
Note that  $|C|=o(1)$ for all $C\in[C_{1},C_{2}]$
 as $m=o(n^{ \frac{\ell+1}{2}})$.

Let $\hat{c}= \frac{1}{2(3^{\ell+2})}$, then $\max\{A/M,|C|\}\leqslant \hat{c}< \frac{1}{3}$ and
$(2e\hat{c})^{M}=O( \frac{1}{n^{\ell+1}})$ as $n\rightarrow\infty$.
Lemma~\ref{l2.6} applies to obtain $\sum_{t=0}^{M} \frac{|\mathcal{L}_r^{\ell,+}(t)|}{|\mathcal{L}_r^{\ell,+}(0)|}
=\exp\bigl[ \frac{[r]_\ell^2[m]_2}{2\ell! n^{\ell}}+
O\bigl( \frac{m^2}{n^{\ell+1}}\bigr)\bigr]$,
%\begin{align*}
%\sum_{t_1=0}^{M_1} \frac{|\mathcal{C}_{t_1,\ell}^{+}|}{|\mathcal{C}_{0,\ell}^{+}|}
%&=\exp\biggl[ \frac{[r]_\ell^2[m]_2}{2\ell! n^{\ell}}+
%O\Bigl( \frac{m^2}{n^{\ell+1}}\Bigr)\biggr],
%\end{align*}
where $O( \frac{m^3}{n^{2\ell}})=O( \frac{m^2}{n^{\ell+1}})$
as $m=o(n^{ \frac{\ell+1}{2}})$.
\end{proof}

\begin{proof}[Proof of Theorem~\ref{t1.3}]By Lemma~\ref{l6.2}, as the equation shown in~\eqref{e3.1},
\begin{align*}
|\mathcal{L}_r^\ell(n, m)|&=\binom{N_0}{m} \exp\Bigl[ -\frac{[r]_\ell^2[m]_2}{2\ell! n^{\ell}}+
O\Bigl( \frac{m^2}{n^{\ell+1}}\Bigr)\Bigr]\\
&= \frac{N_0^m}{m!} \exp\Bigl[ -\frac{[r]_\ell^2[m]_2}{2\ell! n^{\ell}}+
O\Bigl( \frac{m^2}{n^{\ell+1}}\Bigr)\Bigr],
\end{align*}
where $\binom{N_0}{m}= \frac{N_0^m}{m!}
\exp\bigl[O\bigl( \frac{m^{2}}{N_0}\bigr)\bigr]
= \frac{N_0^m}{m!}\exp\bigl[O\bigl( \frac{m^{2}}{n^{\ell+1}}\bigr)\bigr]$. We complete the proof of Theorem~\ref{t1.3}.
\end{proof}

\begin{remark}
%The hypergraphs in $\mathcal{L}_r^\ell(n,m)$ are sparse and
%have only one type of clusters as $3\leq\ell\leq r-1$ and
%$m=o(n^{ \frac{\ell+1}{2}})$, while  the result here combined with
%By Theorem~\ref{t1.1} and Theorem~\ref{t1.3}, we have the asymptotic enumeration
%formula of $\mathcal{S}(n,r,\ell; m)$ as $2\leqslant \ell\leqslant r-1$ and $m=o(n^{ \frac{\ell+1}{2}})$.
%It is helpful
%for us to consider the hitting time of connectivity
%for $\{\mathbb{S}(n,r,\ell; m)\}_m$ in Section~\ref{s:6}.

We also extend the probability that a
 random linear $r$-graph with $m=o(n^{ \frac{3}{2}})$ edges contains a given
  subhypergraph (Theorem~1.4 in~\cite{mckay18}),
  by similar discussions with appropriate modifications,
  to the case  $3\leqslant\ell \leqslant r-1$ and $m=o(n^{ \frac{\ell+1}{2}})$.
We show it in the Appendix for reference.
\end{remark}

%\begin{remark}
%the probability that a
%  random linear $r$-graph with $m=o(n^{ \frac{3}{2}})$ edges contains a given
%  subhypergraph (Theorem~\ref{t1.2} in~\cite{mckay18}).
%  In fact, by the similar discussions with appropriate modifications,  we also extend
%Theorem~\ref{t1.23} and Theorem~\ref{t1.2} for $\ell=2$ and $m=o(n^{ \frac{3}{2}})$
%to Theorem~\ref{t1.3} and Theorem~\ref{t1.123}
%for $3\leq\ell \leq r-1$ and $m=o(n^{ \frac{\ell+1}{2}})$,
%respectively.
%\end{remark}

\section{Connectivity for $\mathbb{L}_r^\ell(n, m)$}\label{s:6}

As one application of Theorem~\ref{t1.1} to
Theorem~\ref{t1.4},
we consider the hitting time of connectivity for partial Steiner $(n,r,\ell)$-system process
$\mathbb{L}_r^\ell(n, m)$ for any given integers $r$ and $\ell$ with $2\leqslant \ell\leqslant r-1$. %. For any given integers $r$ and $\ell$ such that $2\leq \ell\leq r-1$,
%The hitting time of $\{\mathbb{H}_r(n,m)\}_m$
%was studied by the relation between $\mathcal{H}_r(n,m)$ and $\mathcal{H}_r(n,p)$ in~\cite{poole15},
%which is a usual way used for graphs. For $\{\mathbb{L}_r^\ell(n,m)\}_m$, there are no good tools to apply.
%We ever surmised that the
%threshold function of connectivity is smaller than the one for $\{\mathbb{H}_r(n,m)\}_m$,
%whereas we prove
% the hitting time of connectivity of $\{\mathbb{L}_r^\ell(n,m)\}_m$ is same with
%the one of $\{\mathbb{H}_r(n,m)\}_m$ below.
%Let $\tau_{c}=\min\{m:\ \mathbb{S}(n,r,\ell; m)\ {\rm{is\ connected}}\}$ and
%$\tau_{o}=\min\{m:\ \delta(\mathbb{S}(n,r,\ell; m))\geqslant 1\}$.
It is clear that $\tau_{o}\leqslant \tau_{c}$.  Let
\begin{align}
m_{L}= \frac{n}{r}(\log n- \omega(n))\quad\text{and}\quad m_{R}= \frac{n}{r}(\log n+ \omega(n)),
\end{align}
where $\omega(n)\rightarrow\infty$ sufficiently slowly when $n\rightarrow\infty$
and taking $\omega(n)=\log\log n$ for convenience.

We prove our main result Theorem~\ref{t1.6} from a sequence of lemmas which we show next.

\begin{lemma}%\noindent{\bf Claim 1}.~~
Let $H$ be chosen from $\mathcal{L}_r^\ell(n, m)$
uniformly at random when $m=o(n^{ \frac{\ell+1}{2}})$,
and $v_1,\cdots,v_t\in [n]$ be $t$ distinct vertices for some fixed integer $t\geqslant 1$.
Then, as $n\rightarrow \infty$,
\begin{align*}
\mathbb{P}\bigl[\deg (v_1)=\cdots=\deg (v_t)=0\bigr]=\exp\Bigl[- \frac{trm}{n}
+O\Bigl( \frac{m}{n^2}+ \frac{m^2}{n^{\ell+1}}\Bigr)\Bigr].
\end{align*}
\end{lemma}
%[Proof of Claim 1]

\begin{proof}\ By Theorem~\ref{t1.1} and Theorem~\ref{t1.3},
for one vertex $v\in [n]$, we have
\begin{align*}
\mathbb{P}\bigl[\deg (v)=0\bigr]&= \frac{|\mathcal{L}_r^\ell(n-1, m)|}{|\mathcal{L}_r^\ell(n, m)|}=
\frac{ N_1^m}{ N_0^m}\exp\biggl[O\Bigl( \frac{m^2}{n^{\ell+1}}\Bigr)\biggr]\\
&=\exp\biggl[- \frac{rm}{n}+O\Bigl( \frac{m}{n^2}+ \frac{m^2}{n^{\ell+1}}\Bigr)\biggr],
\end{align*}
where the last equality is true because $ \frac{ N_1}{N_0}
=\exp[- \frac{r}{n}+O( \frac{1}{n^2})]$.
Thus, for a  fixed integer $t\geqslant 1$,
\begin{align*}
\mathbb{P}\bigl[\deg (v_1)=\cdots=\deg (v_t)=0\bigr]= \frac{|\mathcal{L}_r^\ell(n-t, m)|}{|\mathcal{L}_r^\ell(n, m)|}
=\exp\biggl[- \frac{trm}{n}+O\Bigl( \frac{m}{n^2}+ \frac{m^2}{n^{\ell+1}}\Bigr)\biggr]
\end{align*}
to complete the proof of Lemma~4.1.
\end{proof}

\begin{lemma}
Let $H$ be chosen from $\mathcal{L}_r^\ell(n, m)$ uniformly at random.
\textit{W.h.p.} there are at most $2\log n$
isolated vertices in $H$ when $m=m_{L}$,
while \textit{w.h.p.} there are no isolated vertices in $H$ when $m=m_{R}$.
Thus,  $\tau_{o}\in [m_{L},m_{R}]$.
\end{lemma}
%\noindent{\bf Claim 2}.~~%\textit{w.h.p.},[Proof of Claim 2]

\begin{proof}\ Let $X_{m}$ be the number
of isolated vertices in $H$, where
 $m\in [m_{L},m_{R}]$. By Lemma~4.1, for any fixed integer $t\geqslant 1$,
 we have the $t$-th factorial moment of $X_{m}$ is
\begin{align}\label{e5.2}
\mathbb{E}[X_{m}]_t&=[n]_t\mathbb{P}\bigl[\deg (v_1)=\cdots=\deg (v_t)=0\bigr]\notag\\
&=[n]_t\exp\biggl[- \frac{trm}{n}
+O\Bigl( \frac{m}{n^2}+ \frac{m^2}{n^{\ell+1}}\Bigr)\biggr].
\end{align}

For $m=m_{R}$ and $t=1$, we have
\begin{align*}
\mathbb{E}[X_{m_{R}}]&=n\exp\biggl[- \frac{rm_R}{n}
+O\Bigl( \frac{m_R}{n^2}+ \frac{m_R^2}{n^{\ell+1}}\Bigr)\biggr]\\
&=\exp\Bigl[-\omega(n)+O\Bigl( \frac{\log n}{n}+ \frac{\log^2 n}{n^{\ell-1}}\Bigr)\Bigr]\\
&\rightarrow0
\end{align*}
when $n\rightarrow\infty$. Thus, \textit{w.h.p.}, there are no isolated
vertices in $H$ when $m=m_R$. %\in \mathcal{L}_r^\ell(n,m_{R})

For $m=m_{L}$ and $t=1$, we have
\begin{align}\label{e44.3}
\mathbb{E}[X_{m_{L}}]=
\exp\Bigl[\omega(n)+O\Bigl( \frac{\log n}{n}+ \frac{\log^2 n}{n^{\ell-1}}\Bigr)\Bigr]\rightarrow\infty
\end{align}
when $n\rightarrow\infty$.
For $m=m_{L}$ and $t=2$, using the equations in~\eqref{e5.2} and~\eqref{e44.3},
 %the second factorial moment
\begin{align*}
\mathbb{E}[X_{m_L}]_2&=[n]_2\exp\biggl[- \frac{2rm_L}{n}
+O\Bigl( \frac{m_L}{n^2}+ \frac{m_L^2}{n^{\ell+1}}\Bigr)\biggr]\\
&= \frac{[n]_2}{n^2}\exp\biggl[2\omega(n)
+O\Bigl( \frac{m_L}{n^2}+ \frac{m_L^2}{n^{\ell+1}}\Bigr)\biggr]\\
&\sim \mathbb{E}^2[X_{m_L}].
\end{align*}
Then, $\mathbb{V}[X_{m_L}]\sim \mathbb{E}[X_{m_L}]$.
By Chebyshev's inequality and $\mathbb{E}[X_{m_{L}}]\rightarrow\infty$
shown in~\eqref{e44.3}, $\mathbb{P}[|X_{m_L}-\mathbb{E}[X_{m_L}]|\geqslant
\mathbb{E}[X_{m_L}]]\leqslant \mathbb{V}[X_{m_L}]/\mathbb{E}^2[X_{m_L}]\sim
1/\mathbb{E}[X_{m_L}]\rightarrow 0$.
%\begin{align*}
%&\mathbb{P}\Bigl[\bigl|X_{m_L}-\mathbb{E}[X_{m_L}]\bigr|\geqslant
%\mathbb{E}[X_{m_L}]\Bigr]\leqslant \frac{\mathbb{V}[X_{m_L}]}{\mathbb{E}^2[X_{m_L}]}\sim
%\frac{1}{\mathbb{E}[X_{m_L}]}\rightarrow 0.
%\end{align*}
Thus, \textit{w.h.p.}, we have at most $2\log n$ isolated vertices in $H$ when $m=m_{L}$ %$\mathbb{L}_r^\ell(n,m_{L})$
because $X_{m_L}$ is concentrated around $\exp[\omega(n)]=\log n$ when $\omega(n)=\log\log n$.
\end{proof}
\begin{lemma}
If $H$ is chosen uniformly at random from $\mathcal{S}(n,r,\ell; m_L)$, then \textit{w.h.p.}
$H$ has at most $2\log n$ isolated vertices and all remaining vertices are in a giant component.
\end{lemma}

\begin{proof}\ Suppose that $H$ is chosen from $\mathcal{L}_r^\ell(n, m_L)$
uniformly at random. By Lemma~4.2, %in addition to isolated vertices in $H$,
we only prove that \textit{w.h.p.} all non-isolated vertices in $H$ belong to a giant component.
%the remaining vertices in $H$ are all in a giant component. %

For any nonnegative integers $k$  and $h$, %in $[n]$
let $Y_{k,h}$ be the number of components on
$k$ vertices with exactly $h$ edges in $H$.
By symmetry, we can assume $k\in [r, \frac{n}{2}]$.
On one hand, we have $h=h(k)\geqslant \frac{k-1}{r-1}$ because
every component is connected and $ \frac{k-1}{r-1}$ is the number of edges in a hypertree; on the other hand,
$h=h(k)\leqslant \min\bigl\{m_L, \binom{k}{\ell}/\binom{r}{\ell}\bigr\}$
because it is also a partial Steiner $(n,r,\ell)$-system. Let %$C_{k,h}$
\begin{align}
h_{\rm{min}}= \frac{k-1}{r-1}\quad{\text{and}}\quad
h_{\rm{max}}=\min\Bigl\{m_L, \binom{k}{\ell}/\binom{r}{\ell}\Bigr\}.
\end{align}

Fix  $k\in [r, \frac{n}{2}]$ and choose some $k$-set on $[n]$,
then the probability that the $k$-set contains exactly $h$ edges
is at most $|\mathcal{L}_r^\ell(k, h)|
\cdot|\mathcal{L}_r^\ell(n-k, m_L-h)|/|\mathcal{L}_r^\ell(n, m_L)|$.
It is clear that
\begin{align}\label{e5.5}
\mathbb{E}[Y_{k,h}]
&\leqslant \frac{ \binom{n}{k}|\mathcal{L}_r^\ell(k, h)|
\cdot|\mathcal{L}_r^\ell(n-k, m_L-h)|}{|\mathcal{L}_r^\ell(n, m_L)|}.
\end{align}
Let $Y_k=\sum_{h_{\rm{min}}\leqslant h\leqslant h_{\rm{max}}} Y_{k,h}$.
%from Claim 1 to Claim 7 below.
We will prove
\begin{align}
\sum_{r\leqslant k\leqslant \frac{n}{2}}\mathbb{E}[Y_{k}]=
\sum_{r\leqslant k\leqslant \frac{n}{2}}\sum_{h_{\rm{min}}\leqslant h\leqslant h_{\rm{max}}}\mathbb{E}[Y_{k,h}]\rightarrow0
\end{align}
%when $n\rightarrow\infty$. Then, \textit{w.h.p.} $\mathbb{L}_r^\ell(n, m_{L})$
to show that the remaining vertices in $H$ \textit{w.h.p.} are all in a giant component.

Define
\begin{align}\label{e22.7}
I_1=\Bigl[r, \frac{n}{\log n}\Bigr]\ \text{and}\
I_2=\Bigl[ \frac{n}{\log n}, \frac{n}{2}\Bigr].
\end{align}
We firstly show $\sum_{k\in I_1}\mathbb{E}[Y_{k}]\rightarrow0$ from Claim 1 to Claim 4, then
$\sum_{k\in I_2}\mathbb{E}[Y_{k}]\rightarrow0$ from Claim 5 to Claim 8. %, respectively in the following three claims.

\bigskip
\noindent{\bf Claim 1}.~~For any $k\in I_1$ and $h_{\rm{min}}\leqslant h\leqslant h_{\rm{max}}$,
\begin{align*}
 \bigl|\mathcal{L}_r^\ell(n-k, m_L-h)\bigr|
\sim
\frac{ N_k^{m_L-h}}{(m_L-h)!}
\exp\biggl[- \frac{[r]_\ell^2 [m_L-h]_2}{2\ell!(n-k)^\ell}\biggr].
\end{align*}

\begin{proof}[Proof of Claim 1]\ Note that $n-k\rightarrow\infty$ when
$k\in I_1$. If $\ell=2$, by Theorem~\ref{t1.1}, we have
\begin{equation*}
\begin{aligned}[b]
\bigl |\mathcal{L}_r^\ell(n-k, m_L-h)\bigr|&
=  \frac{N_k^{m_L-h}}{(m_L-h)!}
\exp\biggl[- \frac{[r]_2^2 [m_L-h]_2}{4(n-k)^2}+O\biggl( \frac{(m_L-h)^3}{(n-k)^4}+ \frac{(m_L-h)^2}{(n-k)^3}\biggr)\biggr]\\
&\sim \frac{ N_k^{m_L-h}}{(m_L-h)!}
\exp\biggl[- \frac{[r]_2^2 [m_L-h]_2}{4(n-k)^2}\biggr],
\end{aligned}
\end{equation*}
where the last approximate equality is true because  $m_{L}= \frac{n}{r}(\log n- \omega(n))$
and $k\in I_1$.

If $3\leqslant \ell\leqslant r-1$, by Theorem~\ref{t1.3}, we similarly also have
\begin{equation*}
\begin{aligned}[b]
\bigl |\mathcal{L}_r^\ell(n-k, m_L-h)\bigr|&
=  \frac{ N_k^{m_L-h}}{(m_L-h)!}
\exp\biggl[- \frac{[r]_\ell^2 [m_L-h]_2}{2\ell!(n-k)^\ell}+O\Bigl( \frac{(m_L-h)^2}{(n-k)^{\ell+1}}\Bigr)\biggr]\\
&\sim \frac{ N_k^{m_L-h}}{(m_L-h)!}
\exp\biggl[- \frac{[r]_\ell^2 [m_L-h]_2}{2\ell!(n-k)^\ell}\biggr].
\end{aligned}
\end{equation*}
\end{proof}

\bigskip
\noindent{\bf Claim 2}.~~For any $k\in I_1$ and $h_{\rm{min}}\leqslant h\leqslant h_{\rm{max}}$,
\begin{align*}
 \bigl|\mathcal{L}_r^\ell(k, h)\bigr|\cdot\bigl|\mathcal{L}_r^\ell(n-k, m_L-h)\bigr|
&< \frac{ \binom{k}{r}^{h_{{\rm min}}}}{h_{{\rm min}}!}
\frac{ N_k^{m_L-h_{{\rm min}}}}{(m_L-h_{{\rm min}})!}
\exp\biggl[- \frac{[r]_\ell^2 [m_L-h_{{\rm min}}]_2}{2\ell!(n-k)^\ell}\biggr].
\end{align*}

\begin{proof}[Proof of Claim 2]\ Firstly, it is clear that $|\mathcal{L}_r^\ell(k, h)|
\leqslant \binom{ \binom{k}{r}}{h}$.
By Claim 1, we have
\begin{equation}\label{e44.7}
\begin{aligned}[b]
 \bigl|\mathcal{L}_r^\ell(k, h)\bigr|\cdot\bigl|\mathcal{L}_r^\ell(n-k, m_L-h)\bigr|
&< \frac{\binom{k}{r}^{h}}{h!} \frac{ N_k^{m_L-h}}{(m_L-h)!}
\exp\biggl[- \frac{[r]_\ell^2 [m_L-h]_2}{2\ell!(n-k)^\ell}\biggr].
\end{aligned}
\end{equation}

Let
\begin{align*}
g_1(h)= \frac{\binom{k}{r}^{h}}{h!} \frac{N_k^{m_L-h}}{(m_L-h)!}
\exp\biggl[- \frac{[r]_\ell^2 [m_L-h]_2}{2\ell!(n-k)^\ell}\biggr].
\end{align*}
Note that
\begin{align*}
 \frac{g_1(h+1)}{g_1(h)}&= \frac{\binom{k}{r}}{h+1} \frac{m_L-h}{N_k}
 \exp\biggl[ \frac{[r]_\ell^2 [m_L-h]_2}{2\ell!(n-k)^\ell}- \frac{[r]_\ell^2 [m_L-h-1]_2}{2\ell!(n-k)^\ell}\biggr]\\
 &= \frac{\binom{k}{r}}{h+1} \frac{m_L-h}{N_k}\exp\Bigl[O\Bigl( \frac{m_L}{(n-k)^\ell}\Bigr)\Bigr].
% &\sim \frac{\binom{k}{r}}{h+1} \frac{m_L-h}{N_k},
\end{align*}
By $h\geqslant h_{{\rm min}}\geqslant \frac{k}{r}$,
 $m_{L}= \frac{n}{r}(\log n- \omega(n))$ and $\ell\geqslant 2$, we further have
\begin{align*}
 \frac{g_1(h+1)}{g_1(h)}&
 < \frac{\binom{k}{r}}{k} \frac{n\log n}{ N_k}\exp\Bigl[O\Bigl( \frac{m_L}{(n-k)^\ell}\Bigr)\Bigr]
 =O\Bigl( \frac{k^{r-1}\log n}{n^{r-1}}\Bigr),
\end{align*}
which implies that $ \frac{g_1(h+1)}{g_1(h)}\rightarrow 0$ when
$k\in I_1$ in~\eqref{e22.7} and $r\geqslant 3$. Thus,
 $g_1(h)$ is decreasing in $h$. %and $r\geqslant 3$
Using the equation~\eqref{e44.7} with $h=h_{\rm{min}}$,
we complete the proof of Claim 2.
\end{proof}

\bigskip
\noindent{\bf Claim 3}.~~ For any $k\in I_1$ and $h_{{\rm min}}\leqslant h\leqslant h_{{\rm max}}$,
\begin{align*}
\mathbb{E}[Y_{k,h}]
< \frac{k^{rh_{{\rm min}}}(\log n)^{k+h_{{\rm min}}}}
{r^{h_{{\rm min}}}k!h_{{\rm min}}!n^{k-1}}\exp\Bigl[ \frac{krh_{{\rm min}}}{n}\Bigr].
\end{align*}
% as $n\rightarrow\infty$.

\begin{proof}[Proof of Claim 3]\ By Theorem~\ref{t1.1} and Theorem~\ref{t1.3},
for $2\leqslant \ell\leqslant r-1$
and $m_L= \frac{n}{r}(\log n-\omega(n))$,
we also have
\begin{align*}
\bigl|\mathcal{L}_r^\ell(n, m_L)\bigr|\sim  \frac{ N_0^{m_L}}{m_L!}
\exp\biggl[- \frac{[r]_\ell^2 [m_L]_2}{2\ell!n^\ell}\biggr].
\end{align*}%when $m_L= \frac{n}{r}(\log n- \omega(n))$.
Using the equations shown in~\eqref{e5.5} and Claim 2, it follows that
\begin{align*}
\mathbb{E}[Y_{k,h}]&<
\frac{ \binom{n}{k} \frac{ \binom{k}{r}^{h_{{\rm min}}}}{h_{{\rm min}}!}
\frac{ N_k^{m_L-h_{{\rm min}}}}{(m_L-h_{{\rm min}})!}
\exp\Bigl[- \frac{[r]_\ell^2 [m_L-h_{{\rm min}}]_2}{2\ell!(n-k)^\ell}\Bigr]}
{ \frac{ N_0^{m_L}}{m_L!}\exp\Bigl[- \frac{[r]_\ell^2 [m_L]_2}{2\ell!n^\ell}\Bigr]}\notag\\
&< \frac{ m_L^{h_{{\rm min}}}\binom{n}{k} \binom{k}{r}^{h_{{\rm min}}}
 N_k^{m_L-h_{{\rm min}}}}{ h_{{\rm min}}! N_0^{m_L}}\exp\biggl[- \frac{[r]_\ell^2 [m_L-h_{{\rm min}}]_2}{2\ell!(n-k)^\ell}+
\frac{[r]_\ell^2 [m_L]_2}{2\ell!n^\ell}\biggr]\\
&<\frac{ m_L^{h_{{\rm min}}}\binom{n}{k} \binom{k}{r}^{h_{{\rm min}}}
N_k^{m_L-h_{{\rm min}}}}{ h_{{\rm min}}! N_0^{m_L}},
\end{align*}
where the last inequality is true because $ \frac{k\ell}{n}-\frac{2h_{{\rm min}}}{m_L}>0$
when $k\in I_1$ and $m_L= \frac{n}{r}(\log n-\omega(n))$, and hence
\begin{align*}
\exp\biggl[- \frac{[r]_\ell^2 [m_L-h_{{\rm min}}]_2}{2\ell!(n-k)^\ell}+
\frac{[r]_\ell^2 [m_L]_2}{2\ell!n^\ell}\biggr]\sim\exp\biggl[- \frac{[r]_\ell^2 [m_L]_2}{2\ell!n^\ell}\biggl( \frac{k\ell}{n}-\frac{2h_{{\rm min}}}{m_L}\biggr)\biggr]< 1.
\end{align*}
Note that $ N_0\sim \frac{n^{r}}{r!}$,
$ N_k\sim
\frac{(n-k)^r}{r!}$,
$ \binom{n}{k}\sim \frac{n^k}{k!}$ and
$ \binom{k}{r}\leqslant \frac{k^r}{r!}$ when $k\in I_1$ and $n\rightarrow\infty$, we further have
\begin{align}\label{e5.81}
\mathbb{E}[Y_{k,h}]&< \frac{m_L^{h_{{\rm min}}}k^{rh_{{\rm min}}}(n-k)^{r(m_L-h_{{\rm min}})}}
{k!h_{{\rm min}}!n^{rm_L-k}}.
\end{align}
Substituting $m_L= \frac{n}{r}(\log n- \log\log n)$ into the above equation, we have $m_L^{h_{{\rm min}}}<
\frac{n^{h_{{\rm min}}}}{r^{h_{{\rm min}}}}(\log n)^{h_{{\rm min}}}$  and  %the above equation~\eqref{e5.81},
\begin{align*}
(n-k)^{r(m_L-h_{{\rm min}})}
&\leqslant n^{r(m_L-h_{{\rm min}})}\exp\biggl[- \frac{kr(m_L-h_{{\rm min}})}{n}\biggr]\\
&= \frac{n^{r(m_L-h_{{\rm min}})} \log^k n}{n^k}\exp\biggl[ \frac{krh_{{\rm min}}}{n}\biggr].
\end{align*}
Thus, by $(r-1)h_{\rm{min}}= k-1$, the equation in (4.9) is reduced to
\begin{align*}
\mathbb{E}[Y_{k,h}]
%&= \frac{m_L^{h_{{\rm min}}}k^{rh_{{\rm min}}}\log^k n}{k!h_{{\rm min}}!n^{^{rh_{{\rm min}}}}}\exp\biggl[ \frac{krh_{{\rm min}}}{n}\biggr]\\
&<\frac{k^{rh_{{\rm min}}}(\log n)^{k+h_{{\rm min}}}}{r^{h_{{\rm min}}}k!h_{{\rm min}}!n^{k-1}}\exp\biggl[ \frac{krh_{{\rm min}}}{n}\biggr].
\end{align*}
\end{proof}

\bigskip
\noindent{\bf Claim 4}.~~$\sum_{k\in I_1}\mathbb{E}[Y_{k}]\rightarrow0$. % as $n\rightarrow\infty$.

\begin{proof}[Proof of Claim 4]\
By the equations in (4.4), Claim 3 and $m_L< \frac{n}{r}\log n$,  it follows that
\begin{align*}
\mathbb{E}[Y_k]&= \sum_{h_{{\rm min}}\leqslant h\leqslant h_{{\rm max}}}\mathbb{E}[Y_{k,h}]<
\frac{k^{rh_{{\rm min}}}(\log n)^{k+h_{{\rm min}}+1}}{r^{h_{{\rm min}}+1}k!h_{{\rm min}}!n^{k-2}}\exp\biggl[ \frac{krh_{{\rm min}}}{n}\biggr].
\end{align*}
Since $h_{\rm{min}}= \frac{k-1}{r-1}\geqslant \frac{k}{r}$ when $k\in I_1$, we have
$h_{{\rm min}}!\geqslant ( \frac{h_{{\rm min}}}{e})^{h_{{\rm min}}}
\geqslant ( \frac{k}{re})^{h_{{\rm min}}}$. By $k!\geqslant ( \frac{k}{e})^k$
and $(r-1)h_{\rm{min}}= k-1$,
we also have %the equation~\eqref{e5.6} is finally reduced to
%\begin{equation}\label{e5.55}
%\begin{aligned}%[b]
\begin{align}\label{e5.55}
\mathbb{E}[Y_k]&<
\frac{k^{rh_{{\rm min}}}(\log n)^{k+h_{{\rm min}}+1}}{rk^{k+h_{{\rm min}}}n^{k-2}}
\exp\biggl[ k+ h_{\rm{min}}+ \frac{krh_{\rm{min}}}{n}\biggr]\notag\\
&= \frac{(\log n)^{k+ \frac{k-1}{r-1}+1}  }{rk  n^{k- 2}}
\exp\biggl[ k+ \frac{k-1}{r-1}+ \frac{kr(k-1)}{n(r-1)}\biggr].
\end{align}
%\end{aligned}
%\end{equation}

Let
\begin{align*}
g_2(k)= \frac{(\log n)^{k+ \frac{k-1}{r-1}+1}}{  n^{k- 2}}
\exp\biggl[ k+ \frac{k-1}{r-1}+ \frac{kr(k-1)}{n(r-1)}\biggr].
\end{align*}
For all $k\in I_1$,
\begin{align*}
 \frac{g_2(k+1)}{g_2(k)}&= \frac{(\log n)^{1+ \frac{1}{r-1}}}{n}\exp\biggl[ 1+ \frac{1}{r-1}+ \frac{2kr}{n(r-1)}\biggr]\\
 &\leqslant \frac{(\log n)^{1+ \frac{1}{r-1}}}{n}\exp\biggl[ 1+ \frac{1}{r-1}+ \frac{2r}{(r-1)\log n}\biggr]\\
 &\rightarrow 0.
\end{align*}
We have $g_2(k)$ is decreasing in $k$ for all $k\in I_1$ and $n\rightarrow\infty$, which implies
\begin{align*}
g_2(k)\leqslant g_2(r)= \frac{(\log n)^{r+ 2}}{  n^{r- 2}}
\exp\biggl[ r+1+ \frac{r^2}{n}\biggr].
\end{align*}
Hence by the equation in~\eqref{e5.55}, we have
\begin{align*}
\mathbb{E}[Y_k]< \frac{1}{rk}g_2(k)
\leqslant \frac{(\log n)^{r+ 2}}{  rkn^{r- 2}}
\exp\biggl[ r+1+ \frac{r^2}{n}\biggr].
\end{align*}
Finally, by $\sum_{k\in I_1} k^{-1}=O(\log n)$, we have
$\sum_{k\in I_1}\mathbb{E}[Y_k]=O(n^{2-r}(\log n)^{ r+3})\rightarrow0$ for $r\geqslant 3$
to complete the proof of Claim 4.
\end{proof}

In the following, we show $\sum_{k\in I_2}\mathbb{E}[Y_{k}]\rightarrow0$
from Claim 5 to Claim 8.  Since
$k\rightarrow\infty$ and $n-k\rightarrow\infty$ when $k\in I_2$,
by Theorem~\ref{t1.1} and Theorem~\ref{t1.3},
for all $2\leqslant \ell \leqslant r-1$ and $h_{{\rm min}}\leqslant h\leqslant h_{{\rm max}}$,
we have
%\begin{equation}%
\begin{align}\label{e5.9}
\bigl|\mathcal{L}_r^\ell(k, h)\bigr|&\sim{ \frac{ {\binom{k}{r}}^h}{h!}}
\exp\biggl[- \frac{[r]_\ell^2[h]_2}{2\ell!k^\ell}\biggr],\\
\bigl|\mathcal{L}_r^\ell(n-k, m_L-h)\bigr|&\sim{ \frac{ {N_k}^{m_L-h}}{(m_L-h)!}}
\exp\biggl[- \frac{[r]_\ell^2[m_L-h]_2}{2\ell!(n-k)^\ell}\biggr].
\end{align}
%\end{equation}

\bigskip
\noindent{\bf Claim 5}.~~ For any $k\in I_2$ and $h\geqslant \frac{m_L}{2}$,
$\bigl|\mathcal{L}_r^\ell(k, h)\bigr|\cdot\bigl|\mathcal{L}_r^\ell(n-k, m_L-h)\bigr|$
is decreasing in $h$.

\begin{proof}[Proof of Claim 5]\ Using the equations shown in~\eqref{e5.9} and (4.12),
we have
\begin{align*}
& \frac{|\mathcal{L}_r^\ell(k, h+1)|\cdot|\mathcal{L}_r^\ell(n-k, m_L-h-1)|}
{|\mathcal{L}_r^\ell(k, h)|\cdot|\mathcal{L}_r^\ell(n-k, m_L-h)|}\\
&\sim \frac{\binom{k}{r}}{(h+1)} \frac{m_L-h}{ N_k}
\exp\biggl[- \frac{[r]_\ell^2h}{\ell!k^\ell}+ \frac{[r]_\ell^2(m_L-h-1)}{\ell!(n-k)^\ell}\biggr].
\end{align*}
Since $ \frac{h}{k^\ell}> \frac{m_L-h-1}{(n-k)^\ell}$ and  $ \frac{m_L-h}{(h+1)}<1$
when $k\leqslant \frac{n}{2}$ and $h\geqslant \frac{m_L}{2}$, we have
$  \frac{m_L-h}{(h+1)}
\exp\bigl[- \frac{[r]_\ell^2h}{\ell!k^\ell}+ \frac{[r]_\ell^2(m_L-h-1)}{\ell!(n-k)^\ell}\bigr]<1$
and
%\begin{align*}
%\exp\biggl[- \frac{[r]_\ell^2h}{\ell!k^\ell}+ \frac{[r]_\ell^2(m_L-h-1)}{\ell!(n-k)^\ell}\biggr]&<1,\\
% \frac{m_L-h}{(h+1)}&<1
% \end{align*}
%when $k\in I_2$ and $h\geqslant \frac{m_L}{2}$, we have
\begin{align}
& \frac{|\mathcal{L}_r^\ell(k, h+1)|\cdot|\mathcal{L}_r^\ell(n-k, m_L-h-1)|}
{|\mathcal{L}_r^\ell(k, h)|\cdot|\mathcal{L}_r^\ell(n-k, m_L-h)|}
< \frac{ \binom{k}{r}}{N_k}\leqslant 1
\end{align}to complete the proof of Claim 5.
\end{proof}

\bigskip
\noindent{\bf Claim 6}.~~For any $k\in I_2$ and $h\leqslant \frac{m_L}{2}$,
\begin{align*}
  \exp\biggl[- \frac{[r]_\ell^2[h]_2}{2\ell!k^\ell}- \frac{[r]_\ell^2[m_L-h]_2}
  {2\ell!(n-k)^\ell}+ \frac{[r]_\ell^2[m_L]_2}{2\ell!n^\ell}\biggr]<1.
\end{align*}

\begin{proof}[Proof of Claim 6]\  Suppose $h=t_1m_L$ and $k=t_2n$.
Since $m_L= \frac{n}{r}(\log n-\omega(n))$ in (4.1), $ \frac{k-1}{r-1}\leqslant h\leqslant \frac{m_L}{2}$
in (4.4) and $k\in I_2= [ \frac{n}{\log n}, \frac{n}{2}]$ in (4.7), we have
$ \frac{1}{\log n}\leqslant t_2\leqslant \frac{1}{2}$, $ \frac{t_2}{\log n}< t_1\leqslant \frac{1}{2}$,
 $[h]_2\sim h^2$, $[m_L-h]_2\sim (m_L-h)^2$ and $[m_L]_2\sim m_L^2$ because
 $h\rightarrow\infty$ and $m_L-h\rightarrow\infty$ when $n\rightarrow\infty$.

Thus, $ \frac{[h]_2}{k^\ell}+ \frac{[m_L-h]_2}
  {(n-k)^\ell}> \frac{[m_L]_2}{n^\ell}$ is equivalent to $\frac{t_1^2}{t_2^\ell}+
\frac{(1-t_1)^2}{(1-t_2)^\ell}>1$, which is clearly true because
\begin{align*}
 \frac{t_1^2}{t_2^\ell}+
\frac{(1-t_1)^2}{(1-t_2)^\ell}\geqslant \frac{t_1^2}{t_2^2}+
\frac{(1-t_1)^2}{(1-t_2)^2}> 1
\end{align*} when $\ell\geqslant 2$, $0< t_1< 1$
and  $ 0< t_2< 1$.
\end{proof}

\bigskip
\noindent{\bf Claim 7}.~~ For any $k\in I_2$ and $h\leqslant \frac{m_L}{2}$,
\begin{align*}
\mathbb{E}[Y_{k,h}]=
O\Bigl(n^{- 1}2^n\bigl(\log n\bigr)^{ \frac{(1-r)m_L}{\log n}}\Bigr).
\end{align*}

\begin{proof}[Proof of Claim 7]\ %Note that $k\rightarrow\infty$ and $n-k\rightarrow\infty$ as $k\in I_2$.
By Theorem~\ref{t1.1} and Theorem~\ref{t1.3}, for all $2\leqslant \ell\leqslant r-1$, we have
\begin{align}\label{e5.10}
\bigl|\mathcal{L}_r^\ell(n, m_L)\bigr|
\sim  \frac{ N_0^{m_L}}{m_L!}
\exp\biggl[-  \frac{[r]_\ell^2[m_L]_2}{2\ell!n^\ell}\biggr].
\end{align}
Using the equations  in (4.5), ~\eqref{e5.9}, (2.12) and~\eqref{e5.10}, we have
\begin{align}\label{e55.6}
& \mathbb{E}[Y_{k,h}]\leqslant \frac{\binom{n}{n/2}\bigl|\mathcal{L}_r^\ell(k, h)\bigr|\cdot\bigl|\mathcal{L}_r^\ell(n-k, m_L-h)\bigr|}
{\bigl|\mathcal{L}_r^\ell(n, m_L)\bigr|}\notag\\
&\sim  \frac{ \binom{n}{n/2}\binom{k}{r}^{h} N_k^{m_L-h}}{ N_0^{m_L}} \frac{m_L!}{h!(m_L-h)!}
\exp\biggl[- \frac{[r]_\ell^2[h]_2}{2\ell!k^\ell}- \frac{[r]_\ell^2[m_L-h]_2}
{2\ell!(n-k)^\ell}+ \frac{[r]_\ell^2[m_L]_2}{2\ell!n^\ell}\biggr]\notag\\
&< \frac{ \binom{n}{n/2} \binom{k}{r}^{h} N_k^{m_L-h}}{N_0^{m_L}} \frac{m_L!}{h!(m_L-h)!},
\end{align}
where the last inequality is true  by Claim 6.

Let
\begin{align*}
g_3(k,h)=  \frac{\binom{k}{r}^{h}N_k^{m_L-h}}
{N_0^{m_L}} \frac{m_L!}{h!(m_L-h)!}.
\end{align*}
Firstly, for $k\in I_2$ and $n\rightarrow\infty$, we have $\binom{k}{r}\sim \frac{k^r}{r!}$,
$ N_k\sim \frac{(n-k)^r}{r!}$ and $ N_0\sim \frac{n^r}{r!}$.
Secondly, for any $h\in [ \frac{k-1}{r-1}, \frac{m_L}{2}]$,
we have $h\rightarrow\infty$ and $m_L-h\rightarrow\infty$.
By Stirling's formula,
 \begin{align*}
 h!&\sim \sqrt{2\pi h}\Bigl( \frac{h}{e}\Bigr)^h,\\
 (m_L-h)!&\sim \sqrt{2\pi (m_L-h)}\Bigl( \frac{m_L-h}{e}\Bigr)^{m_L-h},\\
m_L!&\sim \sqrt{2\pi m_L}\Bigl( \frac{m_L}{e}\Bigr)^{m_L},
\end{align*}
%when $h\rightarrow\infty$ and $m_L-h\rightarrow\infty$.
we further have
\begin{align}\label{e5.11}
g_3(k,h)&< \frac{\sqrt{m_L}}{\sqrt{ h(m_L-h)}} \frac{k^{rh}(n-k)^{r(m_L-h)}}
{n^{rm_L}} \frac{m_L^{m_L}}{h^h(m_L-h)^{m_L-h}}.
\end{align}
%we have For any $k\in I_2$, consider $h\in [ \frac{k-1}{r-1}, \frac{m_L}{2}]$.
Note that $k^{rh}(n-k)^{r(m_L-h)}$ attains  its maximum when $k= \frac{h n}{m_L}$.
Substitute $k= \frac{h n}{m_L}$ into the equation in~\eqref{e5.11}, thus
\begin{align*}%\label{e5.11}
g_3(k,h)&< \frac{\sqrt{m_L}}{\sqrt{ h(m_L-h)}}\Bigl( \frac{h}{m_L}\Bigr)^{rh}
\Bigl(1- \frac{h}{m_L}\Bigr)^{r(m_L-h)} \frac{m_L^{m_L}}{h^h(m_L-h)^{m_L-h}}.
\end{align*}

Let
\begin{align}\label{e5.12}
g_4(h)=  \frac{\sqrt{m_L}}{\sqrt{ h(m_L-h)}}\Bigl( \frac{h}{m_L}\Bigr)^{rh}
\Bigl(1- \frac{h}{m_L}\Bigr)^{r(m_L-h)} \frac{m_L^{m_L}}{h^h(m_L-h)^{m_L-h}},
\end{align}
where $h\in \bigl[ \frac{m_L}{\log n}, \frac{m_L}{2}\bigr]$ because $k= \frac{h n}{m_L}$
and $k\in I_2$ in (4.7). Take the logarithm to $g_4(h)$ and differentiate
$\log(g_4(h))$ with respect to $h$. It follows that
\begin{align*}
g_4'(h)&=g_4(h)\Bigl( -\frac{1}{2h}+ \frac{1}{2(m_L-h)} +(r-1)
\log \frac{h}{m_L-h}\Bigr)\leqslant 0
\end{align*}
because $h\in \bigl[ \frac{m_L}{\log n}, \frac{m_L}{2}\bigr]$ and $g_4(h)\geqslant 0$. Hence, we have
 \begin{align}\label{e55.7}
 g_4(h)\leqslant g_4\Bigl( \frac{m_L}{\log n}\Bigr).
 \end{align} %when
 %$h\in \bigl[ \frac{m_L}{\log n}, \frac{m_L}{2}\bigr]$.
Putting $h= \frac{m_L}{\log n}$ into the equation in~\eqref{e5.12}, %and $m_L= \frac{n}{r}(\log n- \omega(n))$
we have
\begin{align}\label{e55.8}
 \frac{\sqrt{m_L}}{\sqrt{ h(m_L-h)}}&=
\sqrt{ \frac{\log n}{m_L}}\Bigl( 1- \frac{1}{\log n}\Bigr)^{- \frac{1}{2}},\notag\\
 \Bigl( \frac{h}{m_L}\Bigr)^{rh}&=\bigl(\log n\bigr)^{- \frac{rm_L}{\log n}},\notag\\
 \Bigl(1- \frac{h}{m_L}\Bigr)^{r(m_L-h)}&=\Bigl( 1- \frac{1}{\log n}\Bigr)^{rm_L(1- \frac{1}{\log n})},\notag\\
 \frac{m_L^{m_L}}{h^h(m_L-h)^{m_L-h}}&=\bigl(\log n\bigr)^{ \frac{m_L}{\log n}}
\Bigl( 1-\frac{1}{\log n}\Bigr)^{ -m_L(1- \frac{1}{\log n})}.
\end{align}
Using the equations in~\eqref{e55.6},~\eqref{e5.12},~\eqref{e55.7} and~\eqref{e55.8},
we have
\begin{align*}
\mathbb{E}[Y_{k,h}]&<\binom{n}{ \frac{n}{2}} \frac{1}{\sqrt{m_L}}\bigl(\log n\bigr)^{ \frac{(1-r)m_L}{\log n}+ \frac{1}{2}}
\Bigl( 1-\frac{1}{\log n}\Bigr)^{ (r-1)m_L(1-\frac{1}{\log n})- \frac{1}{2}}.
\end{align*}
Since $ \binom{n}{ \frac{n}{2}}= \frac{\sqrt{2\pi n}(\frac{n}{e})^n}{\pi n(\frac{n}{2e})^n}
=O( \frac{2^n}{\sqrt{n}})$  by Stirling's formula when $n\rightarrow\infty$, $m_L= \frac{n}{r}(\log n-\omega(n))$ and
\begin{align*}
\Bigl( 1-\frac{1}{\log n}\Bigr)^{ (r-1)m_L(1-\frac{1}{\log n})- \frac{1}{2}}=O(1)
\end{align*}
when $n\rightarrow\infty$, we further have $\mathbb{E}[Y_{k,h}]
=O\bigl(n^{- 1}2^n\bigl(\log n\bigr)^{ \frac{(1-r)m_L}{\log n}}\bigr)$
%\begin{align*}
%\mathbb{E}[Y_{k,h}]
%&=O\Bigl(n^{- 1}2^n\bigl(\log n\bigr)^{ \frac{(1-r)m_L}{\log n}}\Bigr)
%\end{align*}
to complete the proof of  Claim 7.
 \end{proof}

\bigskip
\noindent{\bf Claim 8}.~~$\sum_{k\in I_2}\mathbb{E}[Y_k]\rightarrow0$.

\begin{proof}[Proof of Claim 8]\
Fix any $k\in I_2$, we have
\begin{align*}
\mathbb{E}[Y_k]&=\sum_{h\leqslant \frac{m_L}{2}}\mathbb{E}[Y_{k,h}] + \sum_{h> \frac{m_L}{2}}\mathbb{E}[Y_{k,h}].
\end{align*}
On one hand, by the equation in (4.5), for any $h>\frac{m_L}{2}$, we have
\begin{align*}
\sum_{h> \frac{m_L}{2}}\mathbb{E}[Y_{k,h}]%\frac{\mathbb{E}[Y_{k,h}]}{\mathbb{E}[Y_{k,h+1}]}
&\leqslant \frac{ \binom{n}{k}}{\bigl|\mathcal{L}_r^\ell(n, m_L)\bigr|}\sum_{h> \frac{m_L}{2}}\bigl|\mathcal{L}_r^\ell(k, h)\bigr|
\cdot\bigl|\mathcal{L}_r^\ell(n-k, m_L-h)\bigr|.
\end{align*}
By the equation in (4.13) of Claim 5, we have
\begin{align*}
\sum_{h> \frac{m_L}{2}}\mathbb{E}[Y_{k,h}]
=O\bigl(\mathbb{E}[Y_{k, \frac{m_L}{2}}]\bigr)
\end{align*}
%\begin{align*}
%\sum_{h> \frac{m_L}{2}}\mathbb{E}\bigl[Y_{k,h}\bigr]&=O\Bigl(\mathbb{E}\bigl[Y_{k, \frac{m_L}{2}}\bigr]\Bigr)\\
%&=O\Bigl(n^{- \frac{1}{2}}2^nm_L^{- \frac{1}{2}}\bigl(\log n\bigr)^{ \frac{(1-r)m_L}{\log n}+1}\Bigr)
%\end{align*}
because the sum of this expression over $h> \frac{m_L}{2}$ is
bounded by a decreasing geometric series dominated by the term $h= \frac{m_L}{2}$.
On the other hand, by Claim 7, we also have
\begin{align*}
\sum_{h\leqslant \frac{m_L}{2}}\mathbb{E}[Y_{k,h}]
=O\Bigl(n^{- 1}2^nm_L(\log n)^{ \frac{(1-r)m_L}{\log n}}\Bigr).
\end{align*}
%\begin{align*}
%\sum_{h\leqslant \frac{m_L}{2}}\mathbb{E}\bigl[Y_{k,h}\bigr]&
%=O\Bigl(n^{- 1}2^nm_L\bigl(\log n\bigr)^{ \frac{(1-r)m_L}{\log n}}\Bigr).
%\end{align*}
Thus,
\begin{align*}
\mathbb{E}[Y_k]&=\sum_{h\leqslant \frac{m_L}{2}}\mathbb{E}[Y_{k,h}] + \sum_{h> \frac{m_L}{2}}\mathbb{E}[Y_{k,h}]\\
&=O\Bigl(n^{- 1}2^nm_L\bigl(\log n\bigr)^{ \frac{(1-r)m_L}{\log n}}\Bigr)\\
&=O\Bigl( 2^n (\log n)^{ \frac{(1-r)m_L}{\log n}+ 1}\Bigr),
\end{align*}
where the last equality is true by  $m_L= \frac{n}{r}(\log n-\omega(n))$.
At last,
\begin{align*}
\sum_{k\in I_2}\mathbb{E}[Y_k]=
O\Bigl( n2^n (\log n)^{ \frac{(1-r)m_L}{\log n}+ 1}\Bigr)\rightarrow0
\end{align*}
because $(\log n+n\log 2+ \log\log n)\log n/(r-1)m_L\log\log n\rightarrow 0$
when $m_L= \frac{n}{r}(\log n-\omega(n))$ and $n\rightarrow\infty$. % because
%\begin{align*}
% \frac{\log n+n\log 2+ \frac{3}{2}\log\log n}{\frac{(r-1)m_L}{\log n}\log\log n}\rightarrow 0
%\end{align*}
%when $m_L= \frac{n}{r}(\log n-\omega(n))$.
\end{proof}

By Claim 4, Claim 8 and Markov's inequality, we have
\begin{align*}
{\mathbb P}\Bigl[\sum_{r\leqslant k\leqslant \frac{n}{2}}Y_k>0\Bigr]\leqslant
\mathbb{E}\Bigl[\sum_{r\leqslant k\leqslant \frac{n}{2}}Y_k\Bigr]\rightarrow 0,
\end{align*}
which implies \textit{w.h.p.} all non-isolated vertices in $H$ belong to a giant component.
Combining with Lemma 4.2, we complete the proof of Lemma 4.3.
\end{proof}

To complete the proof of Theorem~\ref{t1.6} we now
let $m=m_R$ and prove that $\mathcal{L}_r^\ell(n,m_R)$ is \textit{w.h.p.} connected.

%Now we will prove that \textit{w.h.p.}
%$\mathcal{L}_r^\ell(n,m_R)$ is connected
%to complete the proof of Theorem. %$\tau_{o}=\tau_{c}$ for $\mathbb{L}_r^\ell(n,m)$ and

%Thus, \textit{w.h.p.}, $\tau_{c}\in [m_{L},m_{R}]$.

%\bigskip
%\noindent{\bf Claim 11}.~~\textit{W.h.p.} $\tau_{o}=\tau_{c}$.

\begin{proof}[Proof of Theorem~\ref{t1.6}]\ Let $H$ be chosen uniformly at random
from $\mathcal{L}_r^\ell(n, m_{L})$. By Lemma~4.3, assume that $H$
consists of a connected component and at most $2\log n$ isolated vertices.
Let $V_1$ denote the collection of these isolated vertices in $H$.
%In order to create the connected graph in $\mathcal{L}_r^\ell(n, m_{R})$,
We add $m_{R}-m_{L}$ random edges to $H$, which are denoted
  by $e_1,\cdots, e_{m_{R}-m_{L}}$ in sequence. If $\tau_o<\tau_c$ then at least one edge
 $e_j$ for $1\leqslant j\leqslant m_R-m_L$ must be added which contains only isolated vertices.
 %during steps $m_L+1,\cdots,m_R$

Let $H_j$ be chosen uniformly at random from $\mathcal{L}_r^\ell(n, m_L+j)$ for $1\leqslant j\leqslant m_R-m_L$.
By Theorem~\ref{t1.4}, we have the probability that $H_j$ contains $e_j$, % for $e_j$ with
\begin{align*}
\mathbb{P}\bigl[e_j\in H_j\bigr]&\leqslant  \frac{m_R}{N_0}\exp\biggl[
O\Bigl( \frac{1}{n^\ell}+ \frac{m_R^2}{n^{\ell+1}}\Bigr)\biggr].%\qedhere
\end{align*}
The number of choices for $e_j$ such that $e_j\subseteq V_1$
is at most $\binom{2\log n}{r}$ %for $1\leqslant j\leqslant m_R-m_L$
because there are at most
$2\log n$ isolated vertices in $H$. Using a union bound, if $H_{m_R-m_L}$ is chosen uniformly at
random from $\mathcal{L}_r^\ell(n, m_R)$, then we have
%Take the  process $\mathbb{L}_r^\ell(n, m)$ to $m_R$, by a union bound,
\begin{align*}
\mathbb{P}[\tau_{o}<\tau_{c}]&\leqslant o(1)+\bigl(m_R-m_L\bigr)\binom{2\log n}{r} \frac{m_R}{N_0}\exp\biggl[
O\Bigl( \frac{1}{n^\ell}+ \frac{m_R^2}{n^{\ell+1}}\Bigr)\biggr]\\
&=o(1)+O\Bigl( \frac{n^2(\log n)^{r+1}\log\log n}{ N_0}\Bigr)\\
&=o(1),%\qedhere
\end{align*}
where the first $o(1)$ comes from the failure probability of Lemma~4.3,
and the last equality is true because $r\geqslant 3$. Thus, \textit{w.h.p.}
%none of these $m_{R}-m_{L}$  edges are contained in $V_1$, which implies,
$\tau_{o}=\tau_{c}$. Combining it with Lemma~4.2 when $m=m_R$, we have \textit{w.h.p.}
$\mathcal{S}(n,r,\ell; m_R)$ is connected to complete the proof of Theorem~\ref{t1.6}.
%By Lemma 2.2, i
\end{proof}

We also have a corollary about
 the distribution on the number of isolated vertices in $\mathbb{L}_r^\ell(n, m)$ when
 $m= \frac{n}{r}\bigl(\log n+c_n)$, where $c_n\rightarrow c\in \mathbb{R}$ as $n\rightarrow\infty$.
% based on the proof of Theorem~\ref{t1.6} and
% Corollary~\ref{c1.6}

\begin{lemma}[\cite{jason00}, Corollary~6.8]\label{l2.7}
Let $X=\sum_{\alpha\in A}I_\alpha$ be a counting variable, where $I_\alpha$ is an indicator variable.
If $\lambda\geqslant 0$ and $\mathbb{E}[X]_k\rightarrow\lambda^k$
for every $k\geqslant 1$ when $n\rightarrow\infty$, then $X\xrightarrow{d}Po(\lambda)$.
\end{lemma}

\begin{proof}[Proof of Corollary~\ref{c1.8}]\  Let $H$ be chosen uniformly at random from
$\mathcal{L}_r^\ell(n, m)$,
where  $m= \frac{n}{r}\bigl(\log n+c_n)$ and $c_n\rightarrow c\in \mathbb{R}$. Consider the factorial
moments of $X$, where $X$ denotes the number of isolated vertices in $H$. %$\mathbb{L}_r^\ell(n,m)$.
By Lemma~4.1 and $m= \frac{n}{r}\bigl(\log n+c_n)$, for some positive integer $t\geqslant 1$, we have
\begin{align*}
\mathbb{E}[X]_t&=[n]_t\mathbb{P}[\deg (v_1)=\cdots=\deg (v_t)=0]\notag\\
&=[n]_t\exp\biggl[- \frac{trm}{n}
+O\Bigl( \frac{m}{n^2}+ \frac{m^2}{n^{\ell+1}}\Bigr)\biggr]\\
&= \frac{[n]_t}{n^t}\exp\Bigl[- tc_n
+o(1)\Bigr],
\end{align*}
and $\mathbb{E}[X]_t\rightarrow \exp[-tc]$ when $n\rightarrow\infty$.
By Lemma~4.4,
we have $X$ tends in distribution to $Po(\lambda)$ with $\lambda=\exp[-c]$.
\end{proof}

\section{Conclusions}

For any fixed integers $r$ and $\ell$
with $3\leqslant\ell\leqslant r-1$, we have the asymptotic enumeration formula
of $\mathcal{L}_r^\ell(n, m)$ when $m=o( n^{ \frac{\ell+1}{2}})$
by similar proof with the case  $\ell=2$ in~\cite{mckay18}.
Applying the enumeration formula, we show the process
  $\mathbb{L}_r^\ell(n, m)$ has the same threshold
 of connectivity with $\mathbb{H}_r(n,m)$,
and it also becomes
connected exactly at the moment when the last
isolated vertex disappears.
What about other extremal properties of the partial Steiner $(n,r,\ell)$-systems process?
%Recently, the differential equation method~\cite{wormald99}
 %(A short and simple proof of Wormald's differ-
%ential equation method was recently shown by Warnke~\cite{warnke19})
% is motivated by the pseudorandom heuristic properties
Recently, Balogh and Li~\cite{balgoh17} obtained an upper bound on the
total number of linear $r$-graphs with given girth for fixed $r\geqslant 3$.
For any fixed integer $g\geqslant 4$, Bohman and Warnke~\cite{bohman19} applied
a natural constrained random process
to typically produce a partial Steiner $(n,3,2)$-system with $(1/6-o(1))n^2$ edges and girth
larger than $g$. % to answer a question
%of Erd\H{o}s. %~\cite{wormald99,warnke19}
The process iteratively adds random $3$-set subject to the constraint that the girth
remains larger than $g$.
In future work, we will consider the final size of the partial Steiner $(n,r,\ell)$-system process
with some constraints on the girth.

\section*{Acknowledgement}

Most  of this work was
finished  when Fang Tian was a
visiting research fellow in Research School of Computer Science at
Australian National University. She is very grateful
for what she learned there.

\section*{Appendix:\ Proof of  Theorem~\ref{t1.4}}\label{s:4}

In the following, we are ready to prove Theorem~\ref{t1.4},
generalizing Theorem~1.4 in~\cite{mckay18}
from $\ell=2$ to $3\leqslant \ell\leqslant r-1$.
We assume $r$ and $\ell$ are any given integers such that
$3\leqslant \ell\leqslant r-1$, $m=o(n^{ \frac{\ell+1}{2}})$
and $k=o( \frac{n^{\ell+1}}{m^2})$ below.
We will also assume that $k\leqslant m$, otherwise Theorem~\ref{t1.4} is trivially true.
%because $[m]_k=0$ if~$k>m$.
Let $K=K(n)$ be a fixed partial Steiner $(n,r,\ell)$-system in
$\mathcal{L}_r^\ell(n, k)$ on $[n]$  with edges $\{e_1,\cdots,e_k\}$.
Consider $H\in \mathcal{L}_r^\ell(n, m)$ chosen uniformly at random.
Let $\mathbb{P}[K\subseteq H]$
be the probability that
$H$ contains $K$ as a subhypergraph. %If $\mathbb{P}[K\subseteq H]\neq 0$,
Then we have
\begin{align*}\label{e8.1}
\mathbb{P}[K\subseteq H]&=\mathbb{P}[e_1,\cdots,e_k\subseteq H]\notag
\\&=\prod_{i=1}^{k} \frac{\mathbb{P}[e_1,\ldots,e_i\in H]}
{\mathbb{P}[e_1,\ldots,e_i\in H]+\mathbb{P}[e_1,\ldots,e_{i-1}\in H,e_i\notin H]}\notag
\\&=\prod_{i=1}^{k}\, \biggl(1+ \frac{\mathbb{P}[e_1,\ldots,e_{i-1}\in H,e_i\notin H]}
{\mathbb{P}[e_1,\ldots,e_i\in H]}\biggr)^{\!\!-1}.
\end{align*}
For $i=1,\ldots,k$, let $\mathcal{L}(m, \overline{e}_i)$ %\mathcal{L}_r^\ell(\overline{e}_i)=
be the set of all partial Steiner $(n,r,\ell)$-systems in $\mathcal{L}_r^\ell(n, m)$
(denoted by $\mathcal{L}(m)$ below in this section) which contain edges
$e_1,\ldots,e_{i-1}$ but not edge $e_i$. Let $\mathcal{L}(m, e_i)=
\mathcal{L}(m)- \mathcal{L}(m, \overline{e}_i)$. %\mathcal{L}_r^\ell(e_i)=
%be the set of all $(n,r,\ell)$-hypergraphs in $\mathcal{L}_r^\ell(n,m)$ which contain edges
%$e_1,\ldots,e_{i}$.
We have the ratio
\begin{equation*}\label{e8.2}
 \frac{\mathbb{P}[e_1,\cdots,e_{i-1}\in H,e_i\notin H]}{\mathbb{P}[e_1,\cdots,e_i\in H]}
= \frac{|\mathcal{L}(m, \overline{e}_i)|}{|\mathcal{L}(m, e_i)|}.
\end{equation*}
Note that  $|\mathcal{L}(m)|\neq 0$ when $m=o(n^{ \frac{\ell+1}{2}})$
by Theorem~\ref{t1.3}. We show below by switching method again that
none of the denominators in the above equation are zero.

Let $H\in \mathcal{L}(m, e_i)$ with $1\leqslant i\leqslant k$. An {\it $e_i$-displacement}
is defined as %in two steps.
removing the edge $e_i$ from $H$, taking any $r$-set distinct with $e_i$ of which no $\ell$ vertices
belong to the same edge and adding this $r$-set as an edge. The new graph is denoted by $H'$.
An {\it $e_i$-replacement} is the reverse of
$e_i$-displacement. An $e_i$-replacement from $H'\in \mathcal{L}(m, \overline{e}_i)$
consists of removing any edge in $E(H')-\{e_1,\cdots,e_{i-1}\}$,
then inserting $e_{i}$. We say that the
 $e_i$-replacement is {\it legal} if
$H\in \mathcal{L}(m, {e}_i)$, otherwise it is illegal.
We need a better estimation than $ N(1+O( \frac{m}{n^\ell}))$
in the proof of Lemma~\ref{l5.3} to analyze the above switching.

\noindent{\bf{Lemma A.1}}\ Assume that $m=o(n^{ \frac{\ell+1}{2}})$ and $1\leqslant i\leqslant k$.
Let $H\in\mathcal{L}(m-1)$ and $P_r$ be the set of $r$-sets  distinct from $e_i$
 of which no $\ell$ vertices belong to
the same edge of $H$. Then
\begin{align*}
|P_r|=\biggl[N- \binom{r}{\ell} m \binom{n-\ell}{r-\ell}\biggr]
\biggl(1+O\Bigl( \frac{1}{n^\ell}+ \frac{m^2}{n^{\ell+1}}\Bigr)\biggr)
\end{align*}
%\end{lemma}

\begin{proof} We use inclusion-exclusion and note that any two edges of $H$ have at most
$\ell-1$ vertices in common. Let $\boldsymbol{i}_\ell=\{i_1,\cdots,i_{\ell}\}$ be the
$\ell$ vertices of an edge $e$ in $H$.
 Let $A_{(e;\boldsymbol{i}_\ell)}$ be the family of
$r$-sets of $[n]$ that contains the vertices $i_1,\ldots,i_{\ell}$ of the edge $e$. Then we have
$N-1-\sum_{\{e;\boldsymbol{i}_\ell\}}|A_{(e;\boldsymbol{i}_\ell)}|\leqslant |P_r|
\leqslant N-1-\sum_{\{e;\boldsymbol{i}_\ell\}}|A_{(e;\boldsymbol{i}_\ell)}|+ \sum_{\{e;\boldsymbol{i}_\ell\}\neq \{e';\boldsymbol{i}'_\ell\}}|A_{(e;\boldsymbol{i}_\ell)}\cap A_{(e';\boldsymbol{i}'_\ell)}|$.

Clearly, $|A_{(e;\boldsymbol{i}_\ell)}|= \binom{n-\ell}{r-\ell}$ for each edge $e$ and
 $\boldsymbol{i}_\ell\subset e$.   We
have $|P_r|\geqslant N-1- \binom{r}{\ell} (m-1) \binom{n-\ell}{r-\ell}$.
Now we consider the upper bound.

\noindent{\bf Case 1}.\ For the case $e=e'$, we have
\begin{align*}
\sum_{\{e;\boldsymbol{i}_\ell\}\neq \{e';\boldsymbol{i}'_\ell\}}
|A_{(e;\boldsymbol{i}_\ell)}\cap A_{(e';\boldsymbol{i}'_\ell)}|
&=\sum_{\alpha=0}^{\ell-1} \frac{1}{2}(m-1) \binom{r}{2\ell-\alpha} \binom{2\ell-\alpha}{\ell}
\binom{\ell}{\alpha} \binom{n-2\ell+\alpha}{r-2\ell+\alpha}\\
&=O\Bigl(m \binom{n-\ell-1}{r-\ell-1}\Bigr),
\end{align*}
where the last equality is true because $\alpha=\ell-1$
corresponds to the largest term.

\noindent{\bf Case 2.}\ For the case $e\neq e'$ and $|\boldsymbol{i}_\ell\cap \boldsymbol{i}'_\ell|=s$
with some $0\leqslant s\leqslant \ell-1$, we have
\begin{align*}
\sum_{s=0}^{\ell-1}\sum_{\{e;\boldsymbol{i}_\ell\}\neq \{e';\boldsymbol{i}'_\ell\}}
|A_{(e;\boldsymbol{i}_\ell)} \cap A_{(e';\boldsymbol{i}'_\ell)}|
&=\sum_{s=0}^{\ell-1}\sum_{\{x_1,\cdots,x_s\}\in \binom{[n]}{s}}
\binom{\codeg (x_1,\cdots,x_s)}{2}
\binom{r-s}{\ell-s}^2 \binom{n-2\ell+s}{r-2\ell+s}\\
&=O\Bigl(m^2\binom{n-\ell-1}{r-\ell-1}\Bigr),
\end{align*}
where the last equality is true because $\sum_{\{x_1,\cdots,x_s\}\in \binom{[n]}{s}}
\binom{\codeg (x_1,\cdots,x_s)}{2}= O(m^2)$
%\begin{align*}
%\sum_{\{x_1,\cdots,x_s\}\in \binom{[n]}{s}}
%\binom{\codeg (x_1,\cdots,x_s)}{2}= O(m^2)
%\end{align*}
and $s=\ell-1$ corresponds to the largest term.
At last, we have
\begin{align*}
\sum_{\{e;\boldsymbol{i}_\ell\}\neq \{e';\boldsymbol{i}'_\ell\}}
|A_{(e;\boldsymbol{i}_\ell)}\cap A_{(e';\boldsymbol{i}'_\ell)}|=O\Bigl(m^2 \binom{n-\ell-1}{r-\ell-1}\Bigr).
\end{align*}
We complete the proof of Lemma A.1 by $  \binom{n-\ell-1}
{r-\ell-1}/\binom{n}{r}= O( \frac{1}{n^{\ell+1}})$ and
$  \binom{n-\ell}{r-\ell}/ \binom{n}{r}=O( \frac{1}{n^{\ell}})$.
\end{proof}

\noindent{\bf {Lemma A.2}}\ Assume that $m=o(n^{ \frac{\ell+1}{2}})$
and $1\leqslant i\leqslant k$. Consider $H'\in \mathcal{L}(m, \overline{e}_i)$
chosen uniformly at random. Let $E^*$
be the set of  $r$-sets $e^*$ of $[n]$ such that $|e^*\cap e_i|\geqslant \ell$.
Then, as $n\rightarrow\infty$,
\begin{align*}
\mathbb{P}\bigl[E^*_i\cap H'\neq\emptyset\bigr]= \frac{(m-i+1)
\binom{r}{\ell} \binom{n-r}{r-\ell}}{N}+O\Bigl( \frac{m^2}{n^{\ell+1}}\Bigr).
\end{align*}
%\end{lemma}

\begin{proof} Fix an $r$-set $e^*\in E^*$.
  Let $\mathcal{L}(m, \overline{e}_i,e^*)$
be the set of all $r$-graphs in $\mathcal{L}(m, \overline{e}_i)$
which contain the edge $e^*$. Let $\mathcal{L}(m, \overline{e}_i,\overline{e}^*)=
\mathcal{L}(m, \overline{e}_i)-\mathcal{L}(m, \overline{e}_i,e^*)$.
Thus, we have
\begin{equation*}\label{e8.3}
\begin{aligned}[b]
\mathbb{P}[e^*\in H']
&= \frac{|\mathcal{L}(m, \overline{e}_i,e^*)|}
{|\mathcal{L}(m, \overline{e}_i,e^*)|+
|\mathcal{L}(m, \overline{e}_i,\overline{e}^*)|}
= \biggl(1+ \frac{|\mathcal{L}(m, \overline{e}_i,\overline{e}^*)|}
{|\mathcal{L}(m, \overline{e}_i,e^*)|}\biggr)^{\!-1}.
\end{aligned}
\end{equation*}

Let $G\in \mathcal{L}(m, \overline{e}_i,e^*)$ and $\mathcal{R}(G)$ be the set
of all ways to move the edge $e^*$ to an $r$-set of $[n]$ distinct from $e^*$ and $e_i$,
of which no $\ell$ vertices are in any remaining  edges of $G$. Call the new graph as $G'$.
By the same proof as Lemma A.1, we have
\begin{align*}
\mathcal{R}(G)= \biggl[N- \binom{r}{\ell} m \binom{n-\ell}{r-\ell}\biggr]
\Bigl(1+O\Bigl( \frac{1}{n^\ell}+ \frac{m^2}{n^{\ell+1}}\Bigr)\Bigr).
\end{align*}

Conversely, let $G'\in\mathcal{L}(m, \overline{e}_i, \overline{e}^*)$
and let $\mathcal{R}'(G')$ be the set of all ways to move one edge in $E(G')-\{e_1,\ldots,e_{i-1}\}$
to $e^*$ such that the resulting graph is in
$\mathcal{L}(m, \overline{e}_i, e^*)$. We apply the same switching to analyze
 the expected number of $\mathcal{R}'(G')$.
Likewise, let $E^{**}$ be the set of $r$-sets $e^{**}$ of $[n]$ such that $|e^{**}\cap e^*|\geqslant \ell$
and fix an $r$-set $e^{**}\in E^{**}$.
 Let $\mathcal{L}(m,\overline{e}_i, \overline{e}^*, {e}^{**})=
 \{H\in \mathcal{L}(m, \overline{e}_i,\overline{e}^*):\ e^{**}\in H\}$
and $\mathcal{L}(m,\overline{e}_i, \overline{e}^*,\overline{e}^{**})=
\mathcal{L}(m,\overline{e}_i,\overline{e}^*)-\mathcal{L}(m, \overline{e}_i, \overline{e}^*,{e}^{**})$.
We also have $\mathbb{P}[e^{**}\in G']
= \bigl(1+ \frac{|\mathcal{L}(m,\overline{e}_i, \overline{e}^*,\overline{e}^{**})|}
{|\mathcal{L}(m, \overline{e}_i, \overline{e}^*,{e}^{**})|}\bigr)^{\!-1}$.
For a hypergraph in $\mathcal{L}(m, \overline{e}_i, \overline{e}^*,{e}^{**})$,
by the same proof as Lemma A.1, we also have $[N- \binom{r}{\ell}m
\binom{n-\ell}{r-\ell}](1+O( \frac{1}{n^\ell}+ \frac{m^2}{n^{\ell+1}}))$
ways to move the edge $e^{**}$ to an $r$-set of $[n]$ distinct from ${e}_i$, ${e}^*$ and ${e}^{**}$
 such tha no $\ell$ vertices are in any remaining edges. Similarly, there are at most $m-i+1$ ways
to switch a hypergraph from $\mathcal{L}(m,\overline{e}_i, \overline{e}^*,\overline{e}^{**})$ to
$\mathcal{L}(m, \overline{e}_i, \overline{e}^*,{e}^{**})$.
We have
$\mathbb{P}[e^{**}\in G']
= O\bigl( \frac{m}{N}\bigr)$.  Note that $|E^{**}|=
\sum_{i=\ell}^{r-1}\binom{r}{i} \binom{n-r}{r-i}=O\bigl(  \binom{n-r}{r-\ell}\bigr)$,
then $\mathbb{P}[E^{**}\cap G'\neq\emptyset]=O\bigl( \frac{m}{n^\ell}\bigr)$ and
the expected number of $\mathcal{R}'(G')$ is $(m-i+1)\bigl(1-O\bigl( \frac{m}{n^\ell}\bigr)\bigr)$.
Thus, we have
\begin{align*}
\frac{|\mathcal{L}(m, \overline{e}_i, \overline{e}^*)|}{|\mathcal{L}(m, \overline{e}_i, e^*)|}
&= \frac{|\mathcal{R}(G)|}{|\mathcal{R}'(G')|}= \frac{N- \binom{r}{\ell}m\binom{n-\ell}{r-\ell}}{m-i+1}
\biggl(1+O\Bigl( \frac{m}{n^\ell}+ \frac{m^2}{n^{\ell+1}}\Bigr)\biggr)\\
&= \frac{N}{m-i+1}\biggl(1+O\Bigl( \frac{m}{n^\ell}+ \frac{m^2}{n^{\ell+1}}\Bigr)\biggr),
\end{align*}
and we also have
\begin{align*}
\mathbb{P}[e^*\in H']=\Bigl(1+ \frac{|\mathcal{L}(m, \overline{e}_i, \overline{e}^*)|}
{|\mathcal{L}(m, \overline{e}_i, e^*)|}\Bigr)^{\!-1}= \frac{m-i+1}{N}
\Bigl(1+O\Bigl( \frac{m}{n^\ell}+ \frac{m^2}{n^{\ell+1}}\Bigr)\Bigr).
\end{align*}

By inclusion-exclusion,
\begin{equation*}
\begin{aligned}
\sum_{e^*\in E^*}&\mathbb{P}[e^*\in H']-\sum_{e_1^*\in E^*, e_2^*\in E^*,e_1^*\cap e_2^*\neq\emptyset}
\mathbb{P}[e_1^*,e_2^*\in H']\notag\\
 &{}\leqslant \mathbb{P}\bigl[E^*\cap H'\neq\emptyset\bigr]\leqslant \sum_{e^*\in E^*}\mathbb{P}[e^*\in H'].
\end{aligned}
\eqno(A.1)
\end{equation*}
Since $|E^*|=\sum_{i=\ell}^{r-1} \binom{r}{i} \binom{n-r}{r-i}= \binom{r}{\ell}
 \binom{n-r}{r-\ell}+O( n^{r-\ell-1})$, we have
 \begin{align*}
 \sum_{e^*\in E^*}\mathbb{P}[e^*\in H']=
 \frac{(m-i+1) \binom{r}{\ell} \binom{n-r}{r-\ell}}{N}
 +O\Bigl( \frac{m^2}{n^{\ell+1}}\Bigr)
 \end{align*}
%\begin{align*}%\label{e8.7}
%\sum_{e^*\in E^*}\mathbb{P}[e^*\in H']=
% \frac{(m-i+1) \binom{r}{\ell} \binom{n-r}{r-\ell}}{N}
% +O\Bigl( \frac{m^2}{n^{\ell+1}}\Bigr)
%\end{align*}
because $O\bigl( \frac{m}{N} \binom{r}{\ell} \binom{n-r}{r-\ell}
( \frac{m}{n^\ell}+ \frac{m^2}{n^{\ell+1}})\bigr)=
O\bigl( \frac{m^2}{n^{\ell+1}}\bigr)$ and $O\bigl( \frac{m}{N}
n^{r-\ell-1}\bigr)=O\bigl( \frac{m^2}{n^{\ell+1}}\bigr)$.

Consider
$\sum_{e_1^*\in E^*, e_2^*\in E^*, e_1^*\cap e_2^*\neq\emptyset}\mathbb{P}[e_1^*,e_2^*\in H']$.
Note that $H'\in \mathcal{L}(n,r,\ell; m, \overline{e}_i)$, then $|e_1^*\cap e_2^*|\leqslant \ell-1$.
Let $\mathcal{L}(m, \overline{e}_i, e_1^*, e_2^*)$,
$\mathcal{L}(m,\overline{e}_i, {e}_1^*, \overline{e}_2^*)$,
$\mathcal{L}(m, \overline{e}_i, \overline{e}_1^*, e_2^*)$ and
$\mathcal{L}(m, \overline{e}_i, \overline{e}_1^*, \overline{e}_2^*)$
be the set of all partial Steiner $(n,r,\ell)$-systems in $\mathcal{L}(m, \overline{e}_i)$ which contain both
$e_1^*$ and  $e_2^*$, only contain $e_1^*$, only contain $e_2^*$ and neither of them,
respectively. Thus, we have
\begin{align*}
&\mathbb{P}[e_1^*,e_2^*\in H']
= \frac{|\mathcal{L}(m, \overline{e}_i, e_1^*, e_2^*)|}{|\mathcal{L}(m, \overline{e}_i)|}\notag\\
&= \frac{|\mathcal{L}(m, \overline{e}_i, e_1^*, e_2^*)|}
{|\mathcal{L}(m, \overline{e}_i, e_1^*, e_2^*)|+|\mathcal{L}
(m, \overline{e}_i, {e}_1^*, \overline{e}_2^*)|+|\mathcal{L}(m, \overline{e}_i, \overline{e}_1^*, e_2^*)|
+|\mathcal{L}(m, \overline{e}_i, \overline{e}_1^*, \overline{e}_2^*)|}\notag\\
&= \biggl(1+ \frac{|\mathcal{L}(m, \overline{e}_i, {e}_1^*, \overline{e}_2^*)|}
{|\mathcal{L}(m, \overline{e}_i, e_1^*, e_2^*)|}+ \frac{|\mathcal{L}(m, \overline{e}_i,
\overline{e}_1^*, e_2^*)|}{|\mathcal{L}(m, \overline{e}_i, e_1^*, e_2^*)|}+
\frac{|\mathcal{L}(m, \overline{e}_i, \overline{e}_1^*, \overline{e}_2^*)|}
{|\mathcal{L}(m, \overline{e}_i, e_1^*, e_2^*)|}\biggr)^{\!-1}. %\label{e10.8}.
\end{align*}
By the similar analysis above, we have
\begin{align*}
 \frac{|\mathcal{L}(m,\overline{e}_i, {e}_1^*, \overline{e}_2^*)|}
 {|\mathcal{L}(m, \overline{e}_i, e_1^*, e_2^*)|}&\geqslant
 \frac{[N- \binom{r}{\ell}m \binom{n-\ell}{r-\ell}]}
 {m-i+1}\Bigl(1+O\Bigl( \frac{1}{n^\ell}+ \frac{m^2}{n^{\ell+1}}\Bigr)\Bigr),\\
 \frac{|\mathcal{L}(m,\overline{e}_i, \overline{e}_1^*, e_2^*)|}
 {|\mathcal{L}(m,\overline{e}_i, e_1^*, e_2^*)|}&\geqslant
 \frac{[N- \binom{r}{\ell}m \binom{n-\ell}{r-\ell}]}{m-i+1}
 \Bigl(1+O\Bigl( \frac{1}{n^\ell}+ \frac{m^2}{n^{\ell+1}}\Bigr)\Bigr).
 \end{align*}
For any hypergraph in $\mathcal{L}(m, \overline{e}_i, e_1^*, e_2^*)$, we move $e_1^*$ and $e_2^*$ away
by the $e_1^*$-displacement and $e_2^*$-displacement operations. For $e_1^*$ (resp. $e_2^*$),
by the same proof as Lemma A.1,
there are  $[N- \binom{r}{\ell}m \binom{n-\ell}{r-\ell}]
(1+O( \frac{1}{n^\ell}+ \frac{m^2}{n^{\ell+1}}))$ ways to move  $e_1^{*}$
 (resp. $e_2^*$)  to an $r$-set of $[n]$ distinct from ${e}_i$, $e_1^*$ and $e_2^*$
 such that the resulting graph is in $\mathcal{L}(m, \overline{e}_i, \overline{e}_1^*, \overline{e}_2^*)$.
Similarly, there are at most $2 \binom{m-i+1}{2}$ ways
to switch a hypergraph from $\mathcal{L}(m, \overline{e}_i, \overline{e}_1^*, \overline{e}_2^*)$ to
$\mathcal{L}(m, \overline{e}_i, e_1^*, e_2^*)$.
Thus, we have
\begin{align*}
\frac{|\mathcal{L}(m, \overline{e}_i, \overline{e}_1^*, \overline{e}_2^*)|}
 {|\mathcal{L}(m, \overline{e}_i, e_1^*, e_2^*)|}
 \geqslant \frac{[N- \binom{r}{\ell}m \binom{n-\ell}{r-\ell}]^2}
 {2\binom{m-i+1}{2}}\Bigl(1+O\Bigl( \frac{1}{n^\ell}+ \frac{m^2}{n^{\ell+1}}\Bigr)\Bigr).
 \end{align*}
Note that there are
 $O( n^{2r-2\ell})$ ways to choose the pair $\{e_1^*,e_2^*\}$
 such that $|e_1^*\cap e_i|\geqslant \ell$, $|e_2^*\cap e_i|\geqslant \ell$ and $|e_1^*\cap e_2^*|\leqslant\ell-1$,
then we have
\begin{align*}
\sum_{e_1^*\in E^*, e_2^*\in E^*,e_1^*\cap e_2^*\neq\emptyset}
\mathbb{P}[e_1^*,e_2^*\in H']=O\Bigl( n^{2r-2\ell} \frac{m^2}{N^2}\Bigr)=O\Bigl( \frac{m^2}{n^{\ell+1}}\Bigr).
\end{align*}
To complete the proof of Lemma A.2, add together the above
equations into inclusion-exclusion formula (A.1).
\end{proof}

By Lemma A.1 and Lemma A.2, we finally have

\noindent{\bf Corollary A.3}\ For any given integers $r$ and $\ell$ such that
$3\leqslant\ell\leqslant r-1$, assume that
 $m=o(n^{ \frac{\ell+1}{2}})$ and $1\leqslant i\leqslant k$.
With notation above, as $n\rightarrow\infty$,\\
\noindent$(a)$\ Let $H\in \mathcal{L}(m, {e}_i)$. The number of $e_i$-displacements  is
$[N- \binom{r}{\ell} m \binom{n-\ell}{r-\ell}]
(1+O( \frac{1}{n^\ell}+ \frac{m^2}{n^{\ell+1}}))$.
\noindent$(b)$\ Consider $H'\in\mathcal{L}(m, \overline{e}_i)$ chosen uniformly at random.
The expected number of legal $e_i$-replacements is
$(m-i+1)\bigl(1- \frac{(m-i+1)\binom{r}{\ell} \binom{n-r}{r-\ell}}{N}+O( \frac{m^2}{n^{\ell+1}})\bigr)$.

\noindent$(c)$\  $\frac{|\mathcal{L}(m,\overline{e}_i)|}{|\mathcal{L}(m,{e}_i)|}
 = \frac{[N- \binom{r}{\ell} m\binom{n-\ell}{r-\ell}]}{m-i+1}\bigl(1+ \frac{(m-i+1) \binom{r}{\ell} \binom{n-r}{r-\ell}}{N}
+O\bigl( \frac{1}{n^\ell}+ \frac{m^2}{n^{\ell+1}}\bigr)\bigr)$.
%\begin{align*}
% \frac{|\mathcal{S}(m,\overline{e}_i)|}{|\mathcal{S}(m,{e}_i)|}
% = \frac{[N- \binom{r}{\ell} m\binom{n-\ell}{r-\ell}]}{m-i+1}\biggl(1+\frac{(m-i+1) \binom{r}{\ell} \binom{n-r}{r-\ell}}{N}
%+O\Bigl( \frac{1}{n^\ell}+ \frac{m^2}{n^{\ell+1}}\Bigr)\biggr).
%\end{align*}

By Corollary A.3 $(c)$, as the equations shown below, we have
\begin{align*}
&\mathbb{P}[K\subseteq H]\\
&=\prod_{i=1}^{k} \frac{m-i+1}{\bigl[N- \binom{r}{\ell} m
\binom{n-\ell}{r-\ell}\bigr]}\biggl[1- \frac{(m-i+1)\binom{r}{\ell}
\binom{n-r}{r-\ell}}{N}
+O\Bigl( \frac{1}{n^\ell}+ \frac{m^2}{n^{\ell+1}}\Bigr)\biggr]\\
&=\prod_{i=1}^{k} \frac{m-i+1}{\bigl[N- \binom{r}{\ell}m \binom{n-\ell}{r-\ell}\bigr]}
\exp\biggl[- \frac{(m-i+1)\binom{r}{\ell} \binom{n-r}{r-\ell}}{N}
+O\Bigl( \frac{1}{n^\ell}+ \frac{m^2}{n^{\ell+1}}\Bigr)\biggr]\\
&= \frac{[m]_k}{N^k}\exp\biggl[ \frac{[r]_{\ell}^2k^2}{2\ell!n^{\ell}}
+O\Bigl( \frac{k}{n^\ell}+ \frac{m^2k}{n^{\ell+1}}\Bigr)\biggr],
\end{align*}
where $k=o\bigl(\frac{n^{\ell+1}}{m^2}\bigr)$. We complete the proof of Theorem~\ref{t1.4}.

\end{document}